\documentclass{article}
\usepackage[utf8]{inputenc}
\usepackage[english]{babel}
\usepackage{amsmath}
\usepackage{amssymb}
\usepackage{amsthm}
\usepackage{enumerate}
\usepackage{color,graphicx}
\usepackage{lastpage209}

\newtheorem{theorem}{Theorem} 
\newtheorem{lemma}{Lemma}
\newtheorem{corollary}{Corollary}
\newtheorem{proposition}{Proposition}
\theoremstyle{definition}
\newtheorem{definition}{Definition}
\newtheorem{remark}{Remark}

\newtheorem{example}{Example}

\def\NN{{\mathbb N}}

\def\P{\mathcal P}

\def\mod{{\rm mod}\,}
\renewcommand{\Im}{\mathop{\mathrm{Im}}\nolimits}

\begin{document}

\title{Restrictions on local embeddability into finite semigroups}

\author{Dmitry Kudryavtsev\footnote{Department of Mathematics, University of Manchester, Oxford Rd, Manchester, M13 9PL, United Kingdom. Email dmitry.kudryavtsev@postgrad.manchester.ac.uk}}

\maketitle

\begin{abstract}

In this paper the concept of local embeddability into finite structures (being LEF) is for the class of semigroups is expanded with investigations of non-LEF structures, a closely related generalising property of local wrapping of finite structures (being LWF) and inverse semigroups. The established results include a description of a family of non-LEF semigroups unifying the bicyclic monoid and Baumslag--Solitar groups and establishing that inverse LWF semigroups with finite number of idempotents are LEF.

Keywords: finite semigroups, local embeddability into finite, inverse semigroups

MSC Classification: 20M10, 20M15, 20M18

\end{abstract}

\maketitle

\section*{Acknowledgments}

The work was financially supported by a University of Manchester President's Doctoral Scholar Award.

The author is grateful to his research supervisors, Prof Mark Kambites and Dr Marianne Johnson, for the provided commentary and insightful conversations regarding the subject of the paper.

\section*{Introduction}

The idea of approximating infinite structures by finite ones has been approached in many ways  through different properties such as residual finiteness (an algebraic approach based on homomorphisms), pseudofiniteness (a model theory approach) and soficity (a more topological approach).

In particular, the notion of local embeddability into the class of finite (LEF, for short) structures, which in a sense incorporates all of the aforementioned paths, was initially introduced in \cite{GV} for groups, expanded to general structures in \cite{Bel} and recently examined specifically in the class of semigroups in \cite{K23}.

The main purpose of this work is to continue this examination. We produce more examples of non-LEF behaviour, investigate a closely related property which we name local wrapping of the class of finite (LWF, for short) semigroups, and study connections between general LEF semigroups and semigroups locally embeddable into the class of finite inverse semigroups (iLEF, for short).

Our paper is organised as follows. In Section 1 we provide the definition of LEF and LWF structures, demonstrate that the former implies the latter, recall connection between LEF groups and seigroups and also several examples. Section 2 covers numerous specific restrictions of being LEF. In Section 3 we prove general properties of LWF structures and establish that the key examples of non-LEF semigroups are also non-LWF. Finally, the results of Section 4 specify the theory of LEF and LWF objects to the class of inverse semigroups, and in particular demonstrate that inverse LWF semigroups with finite number of idempotents are also LEF.

\section{Definitions and initial results}

We begin with a definition of the main structures studied in this paper.

\begin{definition}\label{def_lef}
A (semi)group $S$ is called {\em locally embeddable into the class of finite (semi)groups} (an {\em LEF} (semi)group  for short) if for every finite subset $H$ of $S$ there exists a finite (semi)group $F_H$ and an injective function $f_H: H \rightarrow F_H$ such that for all  $x,y \in H$ with $xy \in H$ we have $(xy)f_H = (xf_H)(yf_H)$.
\end{definition}

The following definition encapsulates reason why a given non-LEF semigroup may fail the requirements of Definition \ref{def_lef}.

\begin{definition}\label{def_nemb}
We say that a finite set $H$ with partially defined multiplication which satisfies associativity for any valid inputs is {\em non-embeddable} if there does not exist a finite (semi)group $F_H$ and an injective function $f_H: H \rightarrow F_H$ such that for all  $x,y \in H$ with $xy \in H$ we have $(xy)f_H = (xf_H)(yf_H)$.
\end{definition}

We also recall the following result connecting LEF groups and semigroups.

\begin{proposition}\label{prop_lefgr}\cite[Proposition 1.2]{K23}
A group is an LEF semigroup if and only if it is an LEF group.
\end{proposition}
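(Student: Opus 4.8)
The plan is to prove the two implications separately; the forward direction is essentially free, while the reverse carries the real content. Suppose first that $G$ is an LEF group. Given any finite subset $H \subseteq G$, the definition supplies a finite \emph{group} $F_H$ together with an injective $f_H : H \to F_H$ enjoying the partial-homomorphism property. Since every finite group is in particular a finite semigroup, the same pair $(F_H, f_H)$ witnesses that $G$ is an LEF semigroup, with no modification required.

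For the reverse direction, assume $G$ is an LEF semigroup and fix a finite subset $H \subseteq G$. The first step is to enlarge $H$ to a finite set $H'$ containing the identity $e$ of $G$ and, for each $h \in H$, its inverse $h^{-1}$; embedding $H'$ suffices, since restricting the resulting map back to $H$ preserves both injectivity and the partial-homomorphism property. Applying the LEF semigroup hypothesis to $H'$ produces a finite semigroup $S$ and an injective $f : H' \to S$ satisfying $(xy)f = (xf)(yf)$ whenever $x, y, xy \in H'$.

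The heart of the argument is to manufacture a finite \emph{group} from $S$ inside which the image $Hf$ lives. Write $\varepsilon = ef$. From $e \cdot e = e$ we obtain $\varepsilon^2 = \varepsilon$, so $\varepsilon$ is idempotent, and from $eh = he = h$ for $h \in H'$ we obtain $\varepsilon(hf) = (hf)\varepsilon = hf$, so $\varepsilon$ acts as a two-sided identity on $H'f$. Consequently $H'f$ is contained in the local submonoid $T = \varepsilon S \varepsilon$, a finite monoid with identity $\varepsilon$. Moreover, for each $h \in H$ the relations $hh^{-1} = h^{-1}h = e$ give $(hf)(h^{-1}f) = (h^{-1}f)(hf) = \varepsilon$ with $h^{-1}f \in T$, so every element of $Hf$ is a unit of $T$. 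The units of the finite monoid $T$ form a finite group $U(T)$, and $Hf \subseteq U(T)$. Setting $F_H = U(T)$ and taking $f_H$ to be the restriction of $f$ to $H$, we obtain an injective map into a finite group that inherits the partial-homomorphism property, since multiplication in $U(T)$ is that of $S$; hence $G$ is an LEF group.

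I expect the main obstacle to be precisely this passage from a finite semigroup to a finite group: the enlargement of $H$ to $H'$ must be arranged so that $\varepsilon = ef$ becomes a genuine identity and each $hf$ becomes genuinely invertible within a finite monoid, which is exactly what licenses the restriction to the group of units. The remaining verifications — idempotency of $\varepsilon$, its identity action, and the inversion relations — are short, but each depends crucially on having included $e$ and the inverses $h^{-1}$ in $H'$ so that the relevant products lie in the domain where $f$ is multiplicative.
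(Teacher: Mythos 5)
Your argument is correct, and since the paper only cites this statement from \cite{K23} without reproducing a proof, there is nothing in the source to diverge from; your route (adjoin $e$ and inverses to $H$, observe that $\varepsilon = ef$ is an idempotent acting as a two-sided identity on $H'f$, and land in the group of units of the local submonoid $\varepsilon S \varepsilon$) is the standard one for this fact. All the products you invoke ($ee$, $eh$, $he$, $hh^{-1}$, $h^{-1}h$) do lie in $H'$, so each application of the partial-homomorphism property is licensed, and the multiplication in $U(\varepsilon S\varepsilon)$ is inherited from $S$, so the restriction of $f$ to $H$ retains both injectivity and the partial-homomorphism property.
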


It has been shown previously that the free groups and semigroups are LEF, while the bicyclic monoid $B = \langle a,b \mid ab=1\rangle$ is not LEF and the set $\{1,a,b,ba\}$ with the partial multiplication inherited from $B$ is non-emdeddable (see \cite[Examples 1.5 and 1.7]{K23} for the semigroup and monoid results).

We introduce an additional notion closely related to the LEF property, inspired by \cite{GV}.

\begin{definition}\label{def_lwf}
We say that a (semi)group $S$ is {\em locally wrapped by the class of finite (semi)groups} (an {\em LWF} (semi)group for short) if for every finite subset $H$ of $S$ there exist a finite (semi)group $D_H$ and a function $d_H: D_H \rightarrow S$ such that $H \subseteq D_H d_H$ and for all $x', y' \in D_H$ with $x'd_H,y' d_H \in H$ we have $(x'y')d_H = (x'd_H)(y'd_H)$.
\end{definition}

\begin{proposition}\label{prop_lefislwf}
Let $S$ be an LEF (semi)group. Then it is LWF.
\end{proposition}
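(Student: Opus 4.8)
The plan is to use the finite (semi)group furnished by the LEF property essentially in reverse, taking $d_H$ to be a one-sided inverse of the LEF embedding. The one subtlety is that an LEF embedding respects only those products $xy$ that happen to land back inside the tracked set, whereas the LWF condition forces us to control the products of \emph{all} pairs drawn from $H$. To secure this I would apply the LEF property not to $H$ itself but to the enlarged finite set
\[
H' = H \cup HH, \qquad HH = \{\, xy : x, y \in H \,\},
\]
which is again finite. This yields a finite (semi)group $F$ and an injection $f \colon H' \to F$ with $(uv)f = (uf)(vf)$ whenever $u, v, uv \in H'$.

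First I would set $D_H = F$ and define $d_H \colon F \to S$ to agree with the partial inverse of $f$ on the image $H'f$, so that $(wf)d_H = w$ for each $w \in H'$, and to send every element of $F \setminus H'f$ to a single fixed $s_0 \in S \setminus H$. Such an $s_0$ exists unless $S = H$ is finite, in which case $S$ is trivially LWF via $D_H = S$ and the identity. Since $d_H$ restricts to a bijection from $H'f$ onto $H' \supseteq H$, the image $D_H d_H$ contains $H$, giving the covering requirement.

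Next I would check the partial homomorphism condition. Suppose $x', y' \in F$ satisfy $x'd_H, y'd_H \in H$. Were either of $x', y'$ outside $H'f$, its image under $d_H$ would be $s_0 \notin H$; hence both lie in $H'f$, say $x' = xf$ and $y' = yf$ with $x = x'd_H \in H$ and $y = y'd_H \in H$. As $x, y \in H$ we have $xy \in HH \subseteq H'$, so $x, y, xy \in H'$ and the LEF relation gives $x'y' = (xf)(yf) = (xy)f$. Hence $(x'y')d_H = ((xy)f)d_H = xy = (x'd_H)(y'd_H)$, as required.

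The crux, I expect, is precisely the decision to enlarge $H$ to $H \cup HH$ before invoking LEF. Applying LEF to $H$ directly would fail, since the product $(xf)(yf)$ need not equal $(xy)f$ when $xy \notin H$ and may even fall back inside $Hf$, forcing $d_H$ to return an element of $H$ distinct from $xy$ and so breaking the required identity. Passing to $H'$ removes this danger by making every product of two elements of $H$ itself tracked, after which the verification is routine bookkeeping.
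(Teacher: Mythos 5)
Your proposal is correct and matches the paper's own argument essentially step for step: both enlarge $H$ to $H \cup H^2$ before invoking LEF, take $D_H$ to be the resulting finite (semi)group, and define $d_H$ as the partial inverse of the embedding extended by a constant element outside $H$. The verification of the covering and partial-homomorphism conditions is the same in both.
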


\begin{proof}
If $S$ is finite, the statement is evident. Assume that $S$ is infinite.

Consider a finite subset $H$ of $S$. Let $K=H \cup H^2, s \in S \setminus K$  and consider the (semi)group $F_{K}$ and the function $f_{K}$ which satisfy the requirements of Definition \ref{def_lef} for $K$. We claim that the (semi)group $D_H = F_{K}$ and the function $d_H$ defined by 
\begin{align*}x' d_H & = \begin{cases} x, & \text {if } x \in K \text { and } x f_{K} = x', \\ s, & \text{otherwise,} 
\end{cases}
\end{align*} satisfy the requirements of Definition \ref{def_lwf}. Indeed, $D_H$ is finite and we have $H \subset D_H d_H$ as $K \subseteq D_H d_H$ by construction. Now suppose $x' d_H =x,y' d_H=y \in H$.  We have $(x' d_H) (y' d_H) =xy \in K$. As $x'y' = (xf_{K}) (y f_{K})  =  (xy)f_{K}$, we have $(x'y')d_H = xy =(x' d_H) (y' d_H)$. 
\end{proof}

In the group case, the converse is also true, which we will demonstrate in the section on inverse semigroups.

\section{Being non-LEF}

While the previous examples of non-LEF semigroups all contain idempotents, there exist non-LEF semigroups without them.

\begin{example}\label{ex_aab}
Consider the semigroup $A = \langle a,b \mid aab = a \rangle$. We claim that it has no idempotents and it is not LEF.

Firstly note that every element of $A$ can be expressed as $b^{\beta_0} a b ^{\beta_1} \ldots a b ^{\beta_n} a^\alpha$, where $\beta_0, \alpha, n \ge 0$ and $\beta_1,\ldots,\beta_n >0$ by reducing every $aab$ to $a$. This expression is unique, as the rewriting system $aab \rightarrow a$ is noetherian (it reduces the length of the word, thus every rewriting chain is finite) and confluent (the rewriting rule cannot be applied to distinct intersecting subwords of a given word, which means that the result of the rewritings is the same regardless of their order).

Assume there is an idempotent $e$ in $A$. It has the normal form as above, so we have equation $$b^{\beta_0} a b ^{\beta_1} \ldots a b ^{\beta_n} a^\alpha b^{\beta_0} a b ^{\beta_1} \ldots a b ^{\beta_n} a^\alpha = b^{\beta_0} a b ^{\beta_1} \ldots a b ^{\beta_n} a^\alpha.$$

If $\alpha \le \beta_0$ we would get that the left hand side is equal to $$b^{\beta_0} a b ^{\beta_1} \ldots a b ^{\beta_n} a b^{\beta_0 - \alpha + 1} a b^{\beta_1} \ldots a b ^{\beta_n} a^\alpha,$$ which is clearly different from $e$.

Thus, we have $\alpha > \beta_0$ and left-hand side is equal to $$b^{\beta_0} a b ^{\beta_1} \ldots a b ^{\beta_n} a^{\alpha-\beta_0+1} b ^{\beta_1} \ldots a b ^{\beta_n} a^\alpha.$$

Similarly, if $\alpha-\beta_0+1 \le \beta_1$, we would get 
$$b^{\beta_0} a b ^{\beta_1} \ldots a b ^{\beta_n} a b ^{\beta_1 - (\alpha-\beta_0 +1) + 1 } \ldots a b ^{\beta_n} a^\alpha,$$

which is different from $e$.

Thus, we have $\alpha-\beta_0+1 > \beta_1$, and left hand-side equal to  
$$b^{\beta_0} a b ^{\beta_1} \ldots a b ^{\beta_n} a^{\alpha-\beta_0-\beta_1+2} b ^{\beta_2} \ldots a b ^{\beta_n} a^\alpha.$$

Continuing in the same manner, we must have $\alpha-\beta_0 -\ldots -\beta_{n-1}+n > \beta_n$, otherwise there will be an irreducible suffix for $e^2$ different from the one for $e$. However, the value $\eta = \alpha-\beta_0 -\ldots -\beta_{n-1}+n - \beta_n$ is the difference between the number of $a$'s and $b$'s in $e$, which is preserved under the congruence generated by the relation $a =aab$. Since we have $e^2 = e$, it follows that $2\eta = \eta$, i.e. $\eta=0$. This is a contradiction, which allows us to conclude that the initial assumption is incorrect.

Now we will prove that $A$ is not LEF. Assume it is LEF. Consider the finite subset $H = \{a,b,ab,aba \}$ of $S$ and $F_H$, $f_H$ satisfying the requirements of Definition \ref{def_lef} for $H$. Denote $p = a f_H$ and $q = b f_H$. We have $$p p q = (a f_H) (a f_H) (b f_H) = (a f_H) ((ab) f_H) =  (aab) f_H =  a f_H = p.$$ Additionally, since $F_H$ is finite, there exist $\pi, \pi'>0$ such that $p^\pi = p ^{\pi + \pi'}$. By multiplying both sides by $q^{\pi}$ on the right we get $p q= p ^{\pi'}$. This means that $p q$ commutes with $p$. However, $(p q) p = (a f_H) (b f_H) (a f_H) = ((ab) f_H)  (a f_H)  = (aba) f_H  \neq a f_H = p = p (p q)$ by the injectivity of $f_H$. Thus, we have another contradiction and $A$ is not LEF.
\end{example}

The second part of the proof in the example above can be immediately generalised to the following statement.

\begin{proposition}\label{prop_reasonaab}
Let $S$ be a semigroup such that for some $x,y \in S$ we have $xxy=x$ and $xyx \neq x$, and let $H = \{x,y,xy,xyx\}$. Then $H$ with multiplication inherited from $S$ is non-embeddable, and, in particular, $S$ is not LEF.
\end{proposition}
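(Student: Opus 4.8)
The plan is to follow the blueprint already laid out in the second half of Example~\ref{ex_aab}, now stated abstractly for a general semigroup $S$ with elements $x,y$ satisfying $xxy=x$ and $xyx\neq x$. I would argue by contradiction: suppose $H=\{x,y,xy,xyx\}$ were embeddable, so that there exist a finite (semi)group $F_H$ and an injective function $f_H:H\to F_H$ respecting all products that land back in $H$. Writing $p=xf_H$ and $q=yf_H$, the first task is simply to record which of the partial products of $H$ are actually defined and lie in $H$, so that the homomorphism-like condition of Definition~\ref{def_nemb} can be invoked on them. The relevant ones are $x\cdot x=xx$ (needed as an intermediate), $x\cdot(xy)=xxy=x\in H$, $x\cdot y=xy\in H$, and $(xy)\cdot x=xyx\in H$; one should note that associativity of the partial multiplication, guaranteed by the hypotheses of Definition~\ref{def_nemb}, lets us treat expressions like $ppq$ unambiguously.

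Next I would derive the key identity $ppq=p$ inside $F_H$. Since $x\cdot(xy)=x$ in $H$ and $xy\in H$, the embedding condition gives $p\,(xy)f_H = x f_H = p$; and since $x\cdot y = xy\in H$ we have $(xy)f_H = pq$. Combining these yields $p(pq)=ppq=p$ in $F_H$. The finiteness of $F_H$ then supplies $\pi,\pi'>0$ with $p^{\pi}=p^{\pi+\pi'}$, and right-multiplying by $q^{\pi}$ together with repeated use of $ppq=p$ collapses the powers of $p$ to give $pq=p^{\pi'}$. The upshot is that $pq$ is a power of $p$, hence commutes with $p$: that is $p(pq)=(pq)p$.

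Finally I would reach the contradiction by computing $(pq)p$ directly. Because $xy\in H$ and $(xy)\cdot x=xyx\in H$, the embedding condition gives $(pq)p=(xy)f_H\cdot xf_H=(xyx)f_H$. But $p(pq)=ppq=p=xf_H$, so commutativity forces $(xyx)f_H=(x)f_H$, and injectivity of $f_H$ then yields $xyx=x$ in $S$, contradicting the hypothesis $xyx\neq x$. This proves $H$ is non-embeddable; since $H\subseteq S$ inherits its multiplication from $S$, any LEF structure on $S$ would embed $H$, so $S$ cannot be LEF.

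The computation is essentially routine once the setup is fixed, so there is no deep obstacle; the one point demanding care is the bookkeeping of \emph{which} products are required to be preserved. In Definition~\ref{def_nemb} the map respects a product only when both factors and their product lie in $H$, so I must check that every product invoked---in particular $x\cdot(xy)$, $x\cdot y$, and $(xy)\cdot x$---genuinely satisfies this closure condition, and that the intermediate manipulations with powers $p^{\pi}$ and $q^{\pi}$ take place entirely within $F_H$ (where multiplication is total) rather than within $H$. Keeping this distinction straight is the only subtlety in transcribing the concrete argument of Example~\ref{ex_aab} into the general statement.
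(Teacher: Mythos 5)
Your proof is correct and follows essentially the same route as the paper, which gives no separate argument for Proposition~\ref{prop_reasonaab} but explicitly states that the second half of Example~\ref{ex_aab} (the identity $ppq=p$, the collapse $pq=p^{\pi'}$ via finiteness, and the resulting commutation contradicting $(pq)p=(xyx)f_H\neq xf_H$) generalises verbatim. Your additional care about which partial products are defined in $H$ is sound bookkeeping but introduces nothing beyond the paper's argument.
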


It has been proven that some of the natural semigroup transformations, such as adjoining a zero or taking direct products, preserve being LEF (see \cite[Section 4]{K23}). The example below demonstrates that the same does not hold true for taking the power semigroup by utilizing the non-embeddable set described above.

\begin{proposition}
The monoid $S = Mon \langle a,b \mid abab =1, baba=1 \rangle$ is an LEF semigroup, while the power semigroup $\P(S)$ is not an LEF semigroup.
\end{proposition}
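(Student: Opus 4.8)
The plan is to handle the two assertions separately, the key observation being that $S$ is secretly a group. From $abab=1$ the word $bab$ is a right inverse of $a$, and from $baba=1$ it is also a left inverse (indeed $(bab)a=1$), so $a$ is a unit; symmetrically $aba$ inverts $b$. Hence both generators are invertible and $S$ is a group. Putting $c=ab$ one gets $c^2=1$ while $a$ has infinite order, and in the presence of inverses the relation $baba=1$ is a consequence of $abab=1$, so $S\cong \ZZ*(\ZZ/2\ZZ)$. This free product of residually finite groups is residually finite, hence LEF as a group, and therefore LEF as a semigroup by Proposition \ref{prop_lefgr}. This settles the first half.

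For the second half I would exhibit inside $\P(S)$ a pair $X,Y$ satisfying the hypotheses $XXY=X$ and $XYX\neq X$ of Proposition \ref{prop_reasonaab}; the non-embeddable set $\{X,Y,XY,XYX\}$ it produces then shows $\P(S)$ is not LEF. The subtlety, and the reason the torsion of $ab$ is essential, is that such an $X$ must be infinite: a short size estimate shows that a finite $X$ with $XXY=X$ is forced to be a coset of a finite subgroup normalised by the translating element, and then $XYX=X$ automatically; likewise an $X$ contained in an abelian (in particular cyclic) subgroup is useless. So one needs a subgroup on which conjugation by $a$ is \emph{strictly contracting}, which the cyclic subgroups of the free product cannot provide.

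The construction I would use is as follows. Put $c=ab$, $c_n=a^{n}ca^{-n}$, and $K=\langle c_n : n\ge 0\rangle\le S$. Since $ac_na^{-1}=c_{n+1}$, the decisive fact is the proper self-conjugation $aKa^{-1}=\langle c_n:n\ge 1\rangle\subsetneq K$ (in the free product the $c_n$ generate an infinite free product of copies of $\ZZ/2\ZZ$, so dropping $c_0$ gives a strictly smaller subgroup); equivalently $K\subsetneq K_1:=a^{-1}Ka$. I then take $X=Ka$ and $Y=a^{-2}Ka$, and verify the two identities using $aK=(aKa^{-1})a$ together with the absorption rules $K\cdot aKa^{-1}=K$ and $K_1\cdot K=K_1$. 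Concretely $X^{2}=KaKa=K(aKa^{-1})a^{2}=Ka^{2}$, whence $X^{2}Y=Ka^{2}\cdot a^{-2}Ka=Ka=X$, while $XY=Ka\cdot a^{-2}Ka=K\cdot(a^{-1}Ka)=KK_1=K_1$ and therefore $XYX=K_1\cdot Ka=K_1a\supsetneq Ka=X$. Thus $XXY=X$ but $XYX\neq X$, exactly as required by Proposition \ref{prop_reasonaab}.

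The main obstacle is isolating the subgroup $K$ with strictly contracting conjugation and then checking the set-products cleanly; this is where the free-product normal forms do the work, since it is precisely the failure of $a$ to normalise $K$ that forces the strict inclusion $XYX=K_1a\supsetneq X$, whereas any choice collapsing $XY$ back to the idempotent $K$ (for instance $Y=a^{-1}K$) would give $XYX=X$ and fail. I would close by remarking that this also exhibits the promised contrast: adjoining a zero or taking direct products preserves LEF, but passing to the power semigroup does not.
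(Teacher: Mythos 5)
Your argument is correct, and the two halves deserve separate comments. For the first assertion you do exactly what the paper does: observe that both generators are invertible, so $S$ is the group $Gp\langle a,c\mid c^2=1\rangle\cong\ZZ*(\ZZ/2\ZZ)$ (the paper spells this out with Tietze transformations), which is residually finite, hence LEF. For the second assertion both you and the paper aim to produce a witness for Proposition \ref{prop_reasonaab} inside $\P(S)$, but the witnesses and the verification are genuinely different. The paper takes $x=S_a$, the set of elements represented by words obtained from $a$ by the rewriting rule $a\to aab$, and $y=\{b\}$, and checks $S_aS_a S_b=S_a$ and $aba\notin S_aS_bS_a$ by a normal-form analysis of the confluent noetherian system $abab\to 1$, $baba\to 1$. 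You instead exploit the group structure directly: with $c=ab$, $c_n=a^nca^{-n}$ and $K=\langle c_n:n\ge 0\rangle$, conjugation by $a$ is strictly contracting on $K$ (this is where the free-product normal form is used, to see that $\langle c_n:n\ge 1\rangle$ omits $c_0$), and the coset computations $X^2Y=Ka=X$, $XYX=K_1a\supsetneq Ka$ for $X=Ka$, $Y=a^{-2}Ka$, $K_1=a^{-1}Ka$ are then one-line set-product identities. Your route buys a cleaner verification (subgroup/coset algebra instead of tracking rewriting chains) and makes transparent \emph{why} an infinite, non-normalised witness is forced, at the cost of invoking the subgroup structure of free products; the paper's route is more elementary in that it never leaves the world of words, and its set $S_a$ is defined uniformly enough to suggest how the example was found. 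Both are complete proofs.
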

\begin{proof}
Note that $S$ is a group with presentation $Gp \langle a,b\mid abab =1 \rangle$ or with an equivalent presentation $Gp\langle a,c\mid c^2 = 1\rangle$ (with a sequence of Tietze transformations $Gp\langle a,b \mid abab =1\rangle \rightarrow Gp \langle a,b,c \mid abab = 1, c =ab \rangle \rightarrow Gp \langle a,b,c \mid abab = 1, c^2 = 1, c =ab \rangle \rightarrow Gp \langle a,b,c \mid c^2 = 1, c =ab \rangle  \rightarrow Gp \langle a,b,c \mid c^2 = 1, c=ab, b =a^{-1}c \rangle \rightarrow Gp \langle a,b,c \mid c^2 = 1, b =a^{-1}c \rangle  \rightarrow Gp \langle a,c \mid c^2 = 1 \rangle$). It is straightforward to see that $Gp \langle a,c \mid c^2 = 1 \rangle$ is a free product of $\mathbb{Z}$ and  $\mathbb{Z}_2$, which is residually finite, which means that it is LEF.

For the power semigroup $\P(S)$ consider the sets $$W_a = \{\text{all the words in } \{a,b\} \text{ obtained from } a \text{ by the rewriting rule } a\rightarrow aab\},$$ $S_a = \{\text{all of the elements of } S \text{ presented by words in } W_a\}$ and $S_b = \{b\}$. We claim that $S_a S_a S_b = S_a$ while $S_a S_b S_a \neq S_a$.

To see the former, note that $S_a S_a S_b \subseteq S_a$ as for $a_1,a_2$ obtained from $a$ by using $a \rightarrow aab$ we  have that $a_1 a_2 b$ is also obtained from $a$ by using this rule, as we can initially rewrite $a$ into $aab$ and then transform the first $a$ into $a_1$ and the second $a$ into $a_2$. Additionally, any word obtained from $a$ by using  $a \rightarrow aab$ except for $a$ has the form $a_1 a_2 b$ since the first step is always $a \rightarrow aab$, and $a$ itself is presented by $aabab \in S_a S_a S_b$.

To see the latter, consider the rewriting system $abab \rightarrow 1$ and $baba \rightarrow 1$. It is noetherian and locally confluent, which means that we can obtain a normal form for each word $w$ in alphabet $\{a,b\}$.

Note that all the words in $W_A$ have form $a^{\alpha_0} b^{\beta_0} a^{\alpha_1} b^{\beta_1} \ldots a^{\alpha_n} b^{\beta_n}$ where $n \ge 0$, $\alpha_0 > \beta_0$ and $\alpha_i \ge \beta_i$ for $1 \le i \le n$ as $a \rightarrow aab$ preserves such form. Additionally, both $abab \rightarrow 1$ and $baba \rightarrow 1$ preserve such form. However, $aba$ is irreducible and does not have such a form, which means that $aba \not\in S_a$ and allows us to conclude that $S_a S_b S_a \neq S_a$. This means that $\P(S)$ is not LEF by Proposition \ref{prop_reasonaab}.
\end{proof}

Naturally, there are more non-embeddable sets.

\begin{proposition}
Let $S$ be an LEF semigroup, and $x,y$ be elements of $S$ such that $yx xy = yx$. Then either $yxy=yx$ or $(yxy)(yxx) = yx$.
\end{proposition}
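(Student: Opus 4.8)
The plan is to follow the strategy of Example~\ref{ex_aab}: use the LEF property to transport the hypothesis into a finite semigroup, settle the question there by a finiteness argument, and pull the conclusion back through injectivity of the local embedding. Concretely, I would take $H=\{x,y,yx,yxx,yxy,yxxy,(yxy)(yxx)\}$, choose a finite semigroup $F_H$ and an injection $f_H\colon H\to F_H$ as in Definition~\ref{def_lef}, and set $a=xf_H$, $b=yf_H$. Tracking the binary products along $yx=(y)(x)$, $yxx=(yx)(x)$, $yxxy=(yxx)(y)$, all of whose factors and outputs lie in $H$, gives $(yxxy)f_H=baab$, and since $yxxy=yx$ this yields the single relation $baab=ba$ in $F_H$; similarly $(yxy)f_H=bab$ and $(yxx)f_H=baa$. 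Everything then reduces to the purely finite statement that $baab=ba$ forces $(bab)(baa)=ba$, after which injectivity of $f_H$ applied to $((yxy)(yxx))f_H=(bab)(baa)=ba=(yx)f_H$ delivers the second alternative $(yxy)(yxx)=yx$, and hence the claimed disjunction.

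The heart of the argument is the finite statement, which I would prove by representing $F_H$ faithfully as transformations. Using the right regular representation $s\mapsto\rho_s$, where $x'\rho_s=x's$, of $F_H$ on $X=F_H^{1}$, put $A=\rho_a$ and $B=\rho_b$, so that $baab=ba$ becomes $BAAB=BA$ and the goal becomes $BABBAA=BA$. Let $V=\Im(BA)$. For $w\in V$, writing $w=z(BA)$ gives $w(AB)=z(BAAB)=z(BA)=w$, so $AB$ fixes $V$ pointwise; in particular $A|_V$ is injective and $V\subseteq\Im(B)$. The key observation is that $V$ is invariant under both $A$ and $B$: since $\Im(BA)=\Im(B)A$ and $V\subseteq\Im(B)$ we get $VA\subseteq\Im(B)A=V$, so $A|_V$ is a permutation $\alpha$ of $V$; and $w(AB)=w$ then forces $B|_V=\alpha^{-1}$, whence $VB\subseteq V$ as well. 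Consequently $BA|_V=\alpha^{-1}\alpha=\mathrm{id}_V$ and $BBAA|_V=\alpha^{-2}\alpha^{2}=\mathrm{id}_V$. Finally, for arbitrary $x'\in X$ we have $x'(BA)\in V$, so by invariance of $V$, $x'(BABBAA)=(x'(BA))(BBAA)=x'(BA)$, proving $BABBAA=BA$.

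The main obstacle is precisely this invariance step: a priori $A$ and $B$ interact with $V$ only one-sidedly through $AB|_V=\mathrm{id}$, and it is the identity $\Im(BA)=\Im(B)A$ combined with $V\subseteq\Im(B)$ that upgrades this to two-sided invariance, turning the action on $V$ into a genuine subgroup action; once that is in place the word $babbaa$ collapses to $ba$ by $\alpha\alpha^{-1}=\mathrm{id}$ and the conclusion lifts. I note that this in fact establishes the second alternative $(yxy)(yxx)=yx$ unconditionally, the first alternative $yxy=yx$ being exactly the degenerate case in which $\alpha$ is trivial (equivalently $bab=ba$ in $F_H$), so phrasing the result as a dichotomy loses nothing. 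The one place I would re-verify carefully is the bookkeeping in the first paragraph of which products must belong to $H$, since an overlooked factorisation is the only thing that could break the transport of the relations into $F_H$.
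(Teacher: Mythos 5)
Your proof is correct, but it takes a genuinely different route from the paper's. The paper stays entirely inside the abstract finite semigroup $F_H$ and runs an index--period computation on the single element $qpp=baa$: it proves $(baa)^{\kappa}=ba^{\kappa+1}$ by induction, uses eventual periodicity to extract a relation $(baa)^{\rho}=ba$, and the disjunction in the statement is exactly the case split $\rho=1$ versus $\rho\ge 2$. You instead linearise through the right regular representation and read $baab=ba$ as saying that $ab$ restricts to the identity on $V=\Im(BA)$; the computation $\Im(B)A=\Im(BA)=V$ together with $V\subseteq\Im(B)$ then makes $V$ invariant under both generators, so that $a$ and $b$ act on $V$ as mutually inverse permutations, $bbaa$ is also the identity there, and $babbaa$ collapses to $ba$. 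Each approach buys something: the paper's is purely equational and mirrors the template of Example~\ref{ex_aab}, whereas yours is more structural and delivers the stronger, unconditional conclusion $(yxy)(yxx)=yx$, so the stated dichotomy is not a genuine one --- the second alternative always holds. (This is in fact implicit in the paper's own proof: in the case $\rho=1$ one has $qpq=qp=qpp$, whence $(qpq)(qpp)=qpqp=qpp=qp$ as well, but you make it explicit.) Your bookkeeping is also sound: every factorisation you use, namely $yx=(y)(x)$, $yxx=(yx)(x)$, $yxxy=(yxx)(y)$, $yxy=(yx)(y)$ and $(yxy)(yxx)$, has all factors and products inside your chosen $H$, so the relation $baab=ba$ and the conclusion $babbaa=ba$ transport correctly through $f_H$ and its injectivity.
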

\begin{proof}
Consider the finite subset $H = \{x,y,yx,yxx,yxy, (yxy)(yxx) \}$. Let $F_H$ be a finite semigroup and $f_H: H \rightarrow F_H$ satisfying the requirements of Definition \ref{def_lef} for $H$. Denote $p= x f_H$ and $q = y f_H$.

Let us demonstrate that $(qpp)^\kappa = qp^{\kappa+1}$ by induction on the power $\kappa$.

The base. For $\kappa=1$ the statement is immediate.

The step. Assume the statement holds for $k=\kappa_0$ and consider $\kappa = \kappa_0 +1$, $\kappa_0 \ge 1$. We have $$(qpp)^{\kappa_0+1} = qpp(qpp)^{\kappa_0} = qppqp^{\kappa_0+1} = qp p^{\kappa_0+1} qp^{\kappa_0+2},$$ as $qppq = (yxx f_H) (y f_H) = (yxxy) f_H = (yx)f_H = (y f_H) (x f_H)  = qp$ by the multiplication properties.

In particular, it follows that  $$q p^{\kappa+1}q =  (qpp)^\kappa y = (qpp)^{\kappa-1} qppq = (qpp)^{\kappa-1} qp = (qpp)^{\kappa-2} q pp =\ldots = q p^\kappa$$ for $\kappa \ge 1$.

As $F_H$ is finite, there exists $\kappa, \rho >0$ such that $$ qp^{\kappa+\rho+1} = (qpp)^{\kappa+\rho}=(qpp)^\kappa = qp^{\kappa+1}.$$ By multiplying it by $q^\kappa$ on the right we get $(qpp)^\rho = qp$. If $\rho=1$, then another multiplication by $q$ on the right gets us $qp = qpq$, which means $yx=yxy$ by the injectivity of $f_H$. Otherwise with the same multiplication we get $(qpp)^{\rho-1} = qpq$, meaning that $qpq$ and $qpp$ commute and $(qpq)(qpp) =(qpp) (qpq)=qppq =qp$, from which it follows that $(yxy)(yxx) = yx$.
\end{proof}

The proposition above means that any set $$H = \{x,y,yx,yxx,yxy, (yxy)(yxx) \}$$ which fails the stated property is non-embeddable.

\begin{proposition}
For the semigroup $T = Sg \langle a,b \mid baab=ba \rangle$, the semigroup $A= Sg \langle a,b \mid aab=a \rangle$ and the bicyclic monoid $B = Mon \langle a,b \mid ab=1 \rangle$ there exist elements $x,y$ such that $yx xy = yx$ and neither $yxy=yx$ nor $(yxy)(yxx) = yx$.
\end{proposition}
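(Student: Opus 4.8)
The plan is to exhibit in all three semigroups the \emph{same} pair $x = a$, $y = b$ and to reduce the entire statement to two word-inequalities. With this choice the hypothesis is immediate in each case: we have $yxxy = baab = ba = yx$, since $baab = ba$ is literally the defining relation of $T$, equals $b(aab) = ba$ in $A$, and equals $ba(ab) = ba$ in $B$ using $ab = 1$. Since moreover $yxy = bab$ and $(yxy)(yxx) = (bab)(baa) = babbaa$, it remains only to check that $bab \neq ba$ and $babbaa \neq ba$ in each of $T$, $A$ and $B$.

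For $B$ and $A$ I would settle these inequalities by normal forms. In the bicyclic monoid every element is uniquely of the form $b^i a^j$; here $bab = b(ab) = b = b^1 a^0$, $\; babbaa = b(ab)baa = b\,baa = b^2 a^2$ and $ba = b^1 a^1$, which are pairwise distinct. In $A$ I would invoke the confluent noetherian rewriting system $aab \to a$ established in Example \ref{ex_aab}: none of the words $ba$, $bab$, $babbaa$ contains a factor $aab$, so they are already in normal form and are manifestly distinct.

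The semigroup $T$ is the only genuinely delicate case, and I expect it to be the main obstacle, because the obvious length-reducing rule $baab \to ba$ for $T$ is \emph{not} confluent: the word $baabaab$ rewrites both to $baaab$ and to $baa$, two distinct irreducible words, which are therefore forced to be equal in $T$. Hence distinctness of elements cannot be read off from distinctness of irreducible words, and in particular the invariant $\#a - \#b$, while enough to separate $bab$ (value $-1$) from $ba$ (value $0$), fails to separate $babbaa$ (value $0$) from $ba$. To circumvent this I would use that $A$ itself satisfies the defining relation of $T$, because $baab = b(aab) = ba$ in $A$; by the universal property of the presentation this yields a homomorphism $\varphi : T \to A$ fixing $a$ and $b$. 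The two inequalities have just been proved in $A$, and a homomorphism cannot send distinct-imaged elements to a common value, so $bab \neq ba$ and $babbaa \neq ba$ hold in $T$ as well. Assembling the three cases completes the argument; by the preceding proposition each associated set $H = \{x,y,yx,yxx,yxy,(yxy)(yxx)\}$ is then non-embeddable.
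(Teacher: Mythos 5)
Your proof is correct and follows essentially the same strategy as the paper: check the inequalities directly in a semigroup where normal forms are available, and transfer them to the remaining cases along the identity-on-generators homomorphisms (which exist because the target satisfies the defining relation of the source). The only cosmetic difference is that the paper verifies everything in the bicyclic monoid $B$ and maps both $T$ and $A$ onto $B$, whereas you handle $A$ directly via the confluent system $aab \to a$ and route $T$ through $A$; both variants are equally valid.
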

\begin{proof}
For the bicyclic monoid consider $x=a, y=b$. We have $baab =_B ba$, but $bab =_B b \neq_B ba$ and $bab baa = b^2 a^2 \neq_B ba$.

For $A$ and $T$ it is enough to note that the natural homomorhisms $\phi: T \rightarrow B$ with $a \phi =a, b \phi =b$ and $\psi: A \rightarrow B$ with $a \psi = a, b \psi =b$ separate $ba, bab$ and $babbaa$, while in both of them $baab = ba$.
\end{proof}

From the two previous propositions we can conclude the following.

\begin{corollary}\label{cor_tab}
The semigroups $T, A, B$ are not LEF.
\end{corollary}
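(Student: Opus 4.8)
The plan is to obtain the corollary as a direct contrapositive consequence of the two propositions immediately preceding it; no new computation is needed, so the whole argument reduces to invoking those results in the correct logical order.

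First I would fix one of the three semigroups, say $S \in \{T, A, B\}$, and argue by contradiction: suppose $S$ is LEF. The second of the two preceding propositions supplies, for this particular $S$, a pair of elements $x, y$ with $yxxy = yx$ such that \emph{neither} $yxy = yx$ \emph{nor} $(yxy)(yxx) = yx$ holds. Feeding this same pair into the first of the two propositions, whose hypotheses are precisely that $S$ is LEF and that $yxxy = yx$, I obtain the conclusion that at least one of $yxy = yx$ or $(yxy)(yxx) = yx$ must hold. This directly contradicts the properties of the pair $x, y$ just selected, so the assumption that $S$ is LEF is untenable. As $S$ was an arbitrary member of $\{T, A, B\}$, all three semigroups fail to be LEF.

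The only thing to keep track of is that the pair $(x,y)$ witnessing the failure of both equalities in the second proposition is exactly the pair supplied to the first proposition, so that the shared hypothesis $yxxy = yx$ is legitimately satisfied throughout; the second proposition is stated precisely so as to produce such a pair for each of $T$, $A$, and $B$, so this matching is immediate. I do not expect any genuine obstacle here, since the substance of the argument lives entirely in the two propositions I am permitted to assume, and the corollary itself is a one-step application of modus tollens to them.
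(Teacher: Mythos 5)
Your proposal is correct and matches the paper exactly: the corollary is stated there as an immediate consequence of the two preceding propositions, with no further argument given, and your modus tollens reading of how they combine is precisely what is intended.
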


Now that we know that $T = Sg \langle a,b \mid baab=ba \rangle$ is not LEF, we will also demonstrate that $T_n = Sg \langle a,b \mid (ba)(ab)^n = (ba)^n \rangle$, $n \ge 2$, are not LEF.

The following lemma is well-known, see for example \cite[Proposition 1.1]{St16}.

\begin{lemma}\label{lem_powerconsist}
Let $F$ be a finite semigroup, $s$ be an element of $F$ and $\kappa,\rho$ be such positive integers that $s^{\kappa} = s^{\kappa+\rho}$, $\kappa$ is minimal possible and $\rho$ is minimal possible for $\kappa$. Then for any $\kappa' \ge \kappa$ and $\rho' > 0$ from $s^{\kappa'}=s^{\kappa'+\rho'}$ follows $\rho' \ge \rho$.
\end{lemma}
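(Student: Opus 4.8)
The plan is to exploit the standard ``tail and cycle'' structure of the monogenic subsemigroup $\langle s\rangle = \{s, s^2, s^3, \dots\}$, which is finite because $F$ is. Here $\kappa$ is its index and $\rho$ its period, and the point of the lemma is that $\rho$ is not merely the smallest period attached to the exponent $\kappa$, but governs every relation $s^{\kappa'} = s^{\kappa'+\rho'}$ with $\kappa' \ge \kappa$. In fact I would prove the sharper statement $\rho \mid \rho'$, from which $\rho' \ge \rho$ is immediate since $\rho' > 0$.

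First I would record the basic consequence of the defining relation: from $s^{\kappa} = s^{\kappa+\rho}$, multiplying by $s^{i}$ gives $s^{\kappa+i} = s^{\kappa+i+\rho}$ for every $i \ge 0$, so the sequence $(s^{m})_{m \ge \kappa}$ is periodic and, by iterating, $s^{\kappa+i} = s^{\kappa + (i \bmod \rho)}$. In particular every power $s^{m}$ with $m \ge \kappa$ lies in the finite ``cycle'' $C = \{s^{\kappa}, s^{\kappa+1}, \dots, s^{\kappa+\rho-1}\}$.

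Next I would show these $\rho$ listed elements are pairwise distinct, which is exactly where the minimality of $\rho$ enters. If $s^{\kappa+i} = s^{\kappa+j}$ with $0 \le i < j \le \rho-1$, then multiplying by $s^{\rho-j}$ yields $s^{\kappa + (\rho-(j-i))} = s^{\kappa+\rho} = s^{\kappa}$, producing a period $\rho - (j-i)$ strictly between $0$ and $\rho$ at exponent $\kappa$, contradicting minimality. Hence $|C| = \rho$, and the shift map $\sigma \colon C \to C$, $x \mapsto sx$, is well defined (since $s\cdot s^{\kappa+\rho-1} = s^{\kappa}$) and is the single $\rho$-cycle $s^{\kappa} \mapsto s^{\kappa+1} \mapsto \dots \mapsto s^{\kappa+\rho-1} \mapsto s^{\kappa}$; in particular $\sigma$ is a permutation of order exactly $\rho$.

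Finally I would translate the hypothesis. Given $\kappa' \ge \kappa$ and $\rho' > 0$ with $s^{\kappa'} = s^{\kappa'+\rho'}$, the element $s^{\kappa'}$ lies in $C$ and $\sigma^{\rho'}(s^{\kappa'}) = s^{\rho'} s^{\kappa'} = s^{\kappa'+\rho'} = s^{\kappa'}$, so $\sigma^{\rho'}$ has a fixed point. Since $\sigma$ is a single $\rho$-cycle, a power $\sigma^{\rho'}$ fixes a point only when it is the identity, that is only when $\rho \mid \rho'$; thus $\rho \mid \rho'$ and so $\rho' \ge \rho$. The one delicate point is the reduction of an arbitrary relation at the possibly larger exponent $\kappa'$ back to the exponent $\kappa$ where minimality is assumed; the cleanest way to handle this, as above, is to observe that on the cyclic part multiplication by $s$ is a bijection, so that periods are controlled by the order of a single permutation rather than by ad hoc exponent manipulation.
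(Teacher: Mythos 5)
Your proof is correct. The paper does not prove this lemma at all --- it cites it as well known (referring to Proposition~1.1 of Steinberg's \emph{Representation Theory of Finite Monoids}) --- so there is no in-paper argument to compare against; your argument is the standard index--period analysis of the monogenic subsemigroup $\langle s\rangle$: the cyclic part $C=\{s^{\kappa},\dots,s^{\kappa+\rho-1}\}$ has exactly $\rho$ distinct elements by minimality of $\rho$, left multiplication by $s$ acts on $C$ as a single $\rho$-cycle, and any relation $s^{\kappa'}=s^{\kappa'+\rho'}$ with $\kappa'\ge\kappa$ forces $\sigma^{\rho'}$ to fix a point and hence $\rho\mid\rho'$. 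This actually establishes the divisibility $\rho\mid\rho'$, which is stronger than the inequality $\rho'\ge\rho$ asserted in the lemma, and every step checks out.
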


\begin{proposition}\label{prop_prob}
Let $S$ be an LEF semigroup, $x,y$ be elements of $S$ such that $(yx)(xy)^n = (yx)^n$, $n \ge 2$. Then there exists a power $m$ such that $m<n^2+2n$, $n \nmid m$ and $(yx)^2(xy)^m =(yx)(xy)^m (yx)$.
\end{proposition}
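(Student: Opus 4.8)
The plan is to exploit the LEF hypothesis to pass to a finite model and then run a finiteness argument governed by the relation $P^kQ^n=P^{n+k-1}$. Concretely, since $S$ is LEF I would first fix a finite subset $H$ of $S$ large enough to contain $x$, $y$, $yx$, $xy$ and every product of the form $(yx)^i(xy)^j(yx)^k$ with $i,k\le 2$ and $0\le j<n^2+2n$, together with all intermediate products needed to realise these as chains of multiplications inside $H$. Applying Definition \ref{def_lef} to $H$ gives a finite (semi)group $F_H$ and an injective $f_H$ that respects every product landing in $H$. Writing $p=xf_H$, $q=yf_H$, $P=(yx)f_H=qp$ and $Q=(xy)f_H=pq$, the desired conclusion $(yx)^2(xy)^m=(yx)(xy)^m(yx)$ will follow from the single equation $P^2Q^m=PQ^mP$ in $F_H$ by injectivity of $f_H$, provided $m<n^2+2n$ so that all elements occurring already lie in $H$.

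Next I would record the identities available in $F_H$. The hypothesis $(yx)(xy)^n=(yx)^n$ becomes $PQ^n=P^n$. From $x(yx)=(xy)x$ and $(yx)y=y(xy)$ one gets the structural relations $Qp=pP$ and $Pq=qQ$, hence $Q^kp=pP^k$ and $qQ^k=P^kq$ for all $k$, and in particular $P^{k+1}=qQ^kp$ and $Q^{k+1}=pP^kq$. A short induction on $k$ then upgrades the hypothesis to $P^kQ^n=P^{n+k-1}$ for every $k\ge 1$, equivalently $P^kQ^{jn}=P^{k+j(n-1)}$. Taking $k=2$ and $k=1$ shows $P^2Q^{jn}=P^{j(n-1)+2}=PQ^{jn}P$, so the target equation already holds for \emph{every} multiple $m=jn$. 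The whole problem is therefore to move this from a multiple of $n$ to a nearby non-multiple while keeping the exponent below $n^2+2n$.

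For that I would use finiteness through Lemma \ref{lem_powerconsist}: let $\kappa,\rho$ be the index and minimal period of $P$. Writing $m=jn+r$ with $r=m\bmod n$ and setting $a=j(n-1)+1$, the equation $P^2Q^{m}=PQ^{m}P$ reduces via the relation to $P^{a+1}Q^r=P^aQ^rP$, whose validity is eventually periodic in the exponents and hence holds on a full residue class. The task is to produce a representative of such a class that is not divisible by $n$ and is smaller than $n^2+2n=n(n+2)$; I expect the bound $n(n+2)$ to arise from testing the $n-1$ admissible residues $r\in\{1,\dots,n-1\}$ across the first several multiples $j\le n+1$, with Lemma \ref{lem_powerconsist} guaranteeing that the coincidence forcing periodicity is already detected inside this window.

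The main obstacle is exactly this last transfer step, and reconciling the two demands \emph{simultaneously}: that the witness $m$ be a non-multiple of $n$ and that it be genuinely small. When the relevant period is not a multiple of $n$, a period-shift of a multiple of $n$ yields a non-multiple, but such a shift need not land below $n^2+2n$ when the period is large; and if $n$ divides that period, every shift of a multiple of $n$ stays a multiple of $n$. Overcoming both issues should force me to use the structural relations $Qp=pP$, $Pq=qQ$ (not merely $PQ^n=P^n$) to locate a genuinely bounded non-multiple witness rather than relying on the possibly large period — and this is where the explicit bound $n^2+2n$ must come from. Once a suitable $m$ is found, injectivity of $f_H$ together with the membership of $(yx)^2(xy)^m$ and $(yx)(xy)^m(yx)$ in $H$ transports the equality back to $S$, completing the proof.
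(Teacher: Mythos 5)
Your setup is correct and matches the paper's: pass to a finite model $F_H$, set $P=qp$, $Q=pq$, obtain $PQ^n=P^n$, and observe that $PQ^{jn}=P^{j(n-1)+1}$ gives the target identity for every multiple of $n$. But the proposal stops exactly at the point where the actual content of the proposition lies, and you say so yourself: you never produce a witness $m$ with $n\nmid m$ and $m<n^2+2n$. The appeal to "eventual periodicity in the exponents, hence validity on a full residue class" does not establish the equation for any single non-multiple of $n$, and your closing paragraph is an accurate diagnosis of why your own plan fails (a large period defeats the bound; a period divisible by $n$ defeats the residue condition) rather than a resolution. As written, this is a genuine gap, not a completed proof.

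For comparison, the paper closes the gap with three concrete steps, all centred on the index and period of $Q=pq$ rather than on shifting exponents of $P$. First, letting $\kappa,\rho$ be the index and minimal period of $pq$ and $\lambda,\tau$ those of $qp$, one checks $|\lambda-\kappa|\le 1$ and $\rho=\tau$, and then rules out $n\mid\rho$: if $n$ divided $\rho$, then $(qp)(pq)^{n\kappa}=(qp)(pq)^{n\kappa+\rho}$ would translate through the collapse $(qp)(pq)^{n\alpha}=(qp)^{(n-1)\alpha+1}$ into a period $\tfrac{n-1}{n}\rho<\tau$ for $qp$, contradicting Lemma \ref{lem_powerconsist}. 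This disposes of one of your two feared cases. Second, the same collapse applied at $\kappa'=n\lceil\kappa/n\rceil$ shows $(qp)^{\frac{n-1}{n}\kappa'+1}$ lies in the periodic part of $\langle qp\rangle$, which forces $\tfrac{n-1}{n}\kappa'+1\ge\lambda\ge\kappa-1$ and hence $\kappa<n^2+n$; this disposes of your other fear, because the witness is built near the (bounded) index, not the (possibly large) period. Third, with $\gamma$ the residue of $\kappa+\rho$ modulo $n$, take $m=\kappa+n-\gamma$: then $m\ge\kappa$, so $(qp)(pq)^m=(qp)(pq)^{m+\rho}$, and $m+\rho\equiv 0\ (\mathrm{mod}\ n)$ makes the right-hand side a pure power of $qp$, hence commuting with $qp$; meanwhile $m\equiv-\rho\not\equiv 0\ (\mathrm{mod}\ n)$ and $m<n^2+2n$. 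Injectivity of $f_H$ then transports $(qp)^2(pq)^m=(qp)(pq)^m(qp)$ back to $S$. These three steps — excluding $n\mid\rho$, bounding $\kappa$, and the specific choice $m=\kappa+n-\gamma$ — are precisely what your proposal is missing.
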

\begin{proof}
Consider the finite subset 
\begin{align*}H = & \{x,y, xy, (xy)^2, \ldots, (xy)^{n^2+n}, yx, (yx)^2, \ldots, (yx)^{n^2+n},\\ &  (yx)(xy),\ldots, (yx)(xy)^{n^2+n},
(yx)^2(xy),\ldots, (yx)^2(xy)^{n^2+n}, \\ & (yx)(xy)(yx),\ldots, (yx)(xy)^{n^2+n}(yx)\}.
\end{align*}
Let $F_H$ be a finite semigroup and $f_H: H \rightarrow F_H$ be a map satisfying the requirements of Definition \ref{def_lef} for $H$. Denote $p= x f_H$ and $q = y f_H$. Note that by the multiplication properties and injectivity of $f_H$ we have $(qp)(pq)^n = (qp)^n$. It follows that for any $\alpha > 0$ we have $$(qp)(pq)^{n \alpha}= (qp)^n (pq)^{n (\alpha -1)} = (qp)^{2n-1} (pq)^{n (\alpha -2)}= \ldots  = (qp)^{\frac{n-1}{n}\alpha +1}.$$
 
Let $\kappa,\rho$  be such positive integers that $(pq)^{\kappa} = (pq)^{\kappa+\rho}$, $\kappa$ is minimal possible and $\rho$ is minimal possible for $\kappa$. Similarly, let $\lambda,\tau$ be such positive integers that $(qp)^{\lambda} = (qp)^{\lambda+\tau}$, $\lambda$ is minimal possible and $\tau$ is minimal possible for $\lambda$.

Note that  $(pq)^{\kappa} = (pq)^{\kappa+\rho}$ implies  $(qp)^{\kappa+1} = (qp)^{\kappa+\rho +1}$ and  $(qp)^{\lambda} = (qp)^{\lambda+\tau}$ implies  $(pq)^{\lambda+1} = (qp)^{\lambda+\tau+1}$, meaning $|\lambda-\kappa| \le 1$. Additionally, by Lemma \ref{lem_powerconsist} the equations above mean that $\rho=\tau$.

If we have $\rho=0 (\mod n)$ then we get  $(pq)^{n\kappa} = (pq)^{n\kappa+\rho}$, which in turn means that  $ (qp) (pq)^{n\kappa}  = (qp)(pq)^{n\kappa+\rho}$ and
$(qp)^{(n-1)\kappa + 1} = (qp)^{(n-1)\kappa+1+ \frac{n-1}{n}\rho}$. However,  $\frac{n-1}{n}\rho <  \rho=\tau$ which contradicts Lemma \ref{lem_powerconsist}, thus this possibility does not occur.

If we have $\rho \neq 0 (\mod n)$, then set $\kappa'$ to be $n \lceil \frac{\kappa}{n} \rceil$. We have $(qp)^{\frac{n-1}{n} \kappa' +1 } =  (qp) (pq)^{\kappa'}= (qp) (pq)^{\kappa' + n\rho}   = (qp)^{\frac{n-1}{n} \kappa' + (n-1)\rho +1 }$, which means $$(n-1) (\frac{\kappa}{n} +1) +1 > (n-1) \lceil \frac{\kappa}{n}  \rceil +1 = \frac{n-1}{n} \kappa' +1 \ge \lambda \ge \kappa-1.$$ From this follows $\kappa<n^2+n$ and $\kappa' < n^2 +n$ as well.

Set $\gamma$ to be the remainder of dividing $\kappa +\rho$ by $n$. We have $(qp)(pq)^{\kappa+n -\gamma} = (qp)(pq)^{\kappa+n -\gamma +\rho} = (qp)^{1 + \frac{n-1}{n}(\kappa+n -\gamma +\rho)}$, which means that $(qp)(pq)^{\kappa+n -\gamma}$ commutes with $qp$. Note that $\kappa+n -\gamma < n^2 +2n$ and not divisible by $n$ as $\kappa -\gamma = -\rho \neq 0 (\mod n)$. Denote $m = \kappa+n -\gamma$. By the injectivity of $f_H$ this translates to $(yx)(xy)^{m}$ commuting with $yx$, which concludes the proof.
\end{proof}

\begin{proposition}
Let $T_n$ be semigroup $Sg\langle a,b \mid (ba)(ab)^n = (ba)^n \rangle$ for $n \ge 2$. Then there exist $x,y \in T_n$ such that $(yx)(xy)^n = (yx)^n$ and $(yx)^2 (xy)^m \neq (yx)(xy)^m (yx)$ for all $m<n^2+2n$ with $n \nmid m$.
\end{proposition}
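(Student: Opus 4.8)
The plan is to take $x=a$ and $y=b$, so that $yx=ba$ and $xy=ab$; then the first identity $(yx)(xy)^n=(yx)^n$ is literally the defining relation of $T_n$ and needs no argument. Everything reduces to showing that the two words $U:=(ba)^2(ab)^m$ and $V:=(ba)(ab)^m(ba)$ are distinct in $T_n$ whenever $n\nmid m$.

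The first move is to pass to the group completion $G_n=Gp\langle a,b\mid (ba)(ab)^n=(ba)^n\rangle$: since the defining relation of $T_n$ holds in $G_n$, sending $a,b$ to $a,b$ gives a homomorphism $T_n\to G_n$, so it suffices to separate the images of $U$ and $V$ there. Writing $u=ba$ and noting $ab=aua^{-1}$, the relation becomes $au^na^{-1}=u^{n-1}$, i.e. $G_n\cong BS(n,n-1)$, the Baumslag--Solitar group (this is the promised link with Baumslag--Solitar groups). Here $U\mapsto u^2\,au^ma^{-1}$ and $V\mapsto u\,(au^ma^{-1})\,u$, and cancelling one $u$ on the left shows these agree if and only if $u$ commutes with $au^ma^{-1}$. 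I would decide this with Britton's lemma for the HNN decomposition of $BS(n,n-1)$ over $\langle u\rangle$, with associated subgroups $\langle u^n\rangle$ and $\langle u^{n-1}\rangle$: the element $V^{-1}U=u^{-1}\,au^{-m}a^{-1}\,u\,au^ma^{-1}$ is a word whose only candidate pinches are $a\,u^{\pm m}\,a^{-1}$ (a pinch iff $n\mid m$) and $a^{-1}ua$ (a pinch iff $n-1\mid 1$, i.e. iff $n=2$). Thus for $n\ge 3$ and $n\nmid m$ there is no pinch, so $V^{-1}U\ne 1$ and $U\ne V$ already in $G_n$; this settles the proposition for all $n\ge 3$ (and for every $m$ with $n\nmid m$, not merely $m<n^2+2n$).

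The main obstacle is the case $n=2$, which the group completion cannot resolve. Indeed, in any group in which the first identity $PQ^n=P^n$ holds (with $P=yx$, $Q=xy$) one gets $Q^n=P^{n-1}$; for $n=2$ this reads $P=Q^2$, forcing $P$ to commute with every power of $Q$ and hence $U=V$ — and this happens for \emph{any} choice of $x,y$, so the collapse in $G_2\cong BS(2,1)$ is unavoidable. The separation of $U$ and $V$ in $T_2$ is therefore genuinely non-group-theoretic: it survives in no group quotient, and since $U,V$ have equal numbers of $a$'s and of $b$'s and the same image in the bicyclic monoid, no additive or bicyclic invariant detects it either. Because $n^2+2n=8$, only the finitely many exponents $m\in\{1,3,5,7\}$ actually occur, so I would finish this case by a direct study of the one-relator congruence generated by $(ba)(ab)^2=(ba)^2$ on these specific words: either by tracking their (finite) congruence classes by hand — noting that the single length-reducing rule $(ba)(ab)^2\to(ba)^2$ is \emph{not} confluent, so one cannot simply compare normal forms — or by exhibiting, for each such $m$, a finite non-group transformation semigroup with generators satisfying $(ba)(ab)^2=(ba)^2$ on which $U$ and $V$ act differently. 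Carrying out this finite but delicate verification for $n=2$ is where the real work lies.
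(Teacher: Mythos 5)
Your reduction to $x=a$, $y=b$ and to separating $U=(ba)^2(ab)^m$ from $V=(ba)(ab)^m(ba)$ matches the paper, but your proof has a genuine gap: the case $n=2$, which the proposition explicitly includes, is not proved. Your group-completion argument is correct and clean for $n\ge 3$: the identification $G_n\cong BS(n,n-1)$ via $u=ba$, $ab=aua^{-1}$, $au^na^{-1}=u^{n-1}$ is right, and the Britton's-lemma analysis of $V^{-1}U=u^{-1}\,au^{-m}a^{-1}\,u\,au^ma^{-1}$ correctly shows there is no pinch when $n\nmid m$ and $n\ge 3$ (the middle syllable $a^{-1}ua$ pinches only when $n-1$ divides $1$). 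You even get the stronger conclusion that $U\ne V$ for \emph{every} $m$ with $n\nmid m$, with no bound needed. But for $n=2$ you correctly diagnose that every group quotient collapses $U$ and $V$ (since $Q^2=P$ forces commutation), and then only sketch a plan --- ``a direct study of the one-relator congruence'' or ``exhibiting a finite transformation semigroup'' --- without carrying it out. That unfinished finite verification is exactly where the difficulty of the $n=2$ case lies, so as it stands the proposition is only established for $n\ge 3$.

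For comparison, the paper avoids groups entirely and argues uniformly for all $n\ge 2$ by a combinatorial analysis of rewriting chains for the single rule $(ba)(ab)^n\leftrightarrow(ba)^n$: assuming a finite chain of rewritings from $V$ to $U$, it tracks the suffixes of successive words (the first word ending in $ab$, then in $(ba)^n$, then in $ba(ab)^n(ba)^{j_1}$, and so on) and shows that because $n\nmid m$ the power of $ab$ appearing in the middle of the word must grow without bound along the chain, contradicting the finiteness of the chain. This is essentially the ``direct study of the congruence'' you defer, executed in a way that does not require confluence and does not split into cases on $n$. Your route, where it applies, is arguably more conceptual and connects to the Baumslag--Solitar remark later in the paper, but to be a complete proof it must be supplemented by an argument of the paper's combinatorial type (or some other non-group-theoretic separation) for $n=2$.
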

\begin{proof}
We can choose $x =a$ and $y =b$. It is immediate that $(ba)(ab)^n = (ba)^n$.

Set $m \in \NN$ such that $n \nmid m$. We will prove that $(ba)(ab)^m(ba) \neq (ba)^2(ab)^m$.

Assume that we can get from the word $w = (ba)(ab)^m(ba)$ to the word $w' = (ba)^2(ab)^m$ using a finite amount of rewritings $\rho_1 =( (ba)(ab)^n \rightarrow (ba)^n)$ or $\rho_2 = ((ba)^n \rightarrow (ba)(ab)^n)$. Denote the chain of rewritings by $w_1 = w, w_2, \ldots, w_k = w'$. Denote by $\gamma$ the maximal power of $ab$ which appears as a subword of any $w_i$.

Let $i_0$ be the smallest among indices $i$ such that $w_i$ ends with $ab$. It is possible only under rewriting $\rho_2$, meaning that we have $w_{i_0} = g_{i_0} (ba) (ab)^n$ and $w_{i_0-1} = g_{i_0} (ba)^n$ where $g_{i_0}$ is some word in $a,b$.

Let $i_1$ be the smallest among indices $i$ such that $w_i$ ends with $(ba)^n$. Evidently $i_1 \le i_0 -1$, so it is impossible to have $w_{i_1-1} = g_{i_1} (ba) (ab)^n$ and $w_{i_1} = g_{i_1} (ba)^n$ for some word $g_{i_1}$ in $a,b$. Additionally, we cannot obtain $w_{i_1}$ using $\rho_2$ as it cannot create subword $(ba)^n$ where there has not been one before. Thus, we must have $w_{i_1 -1} =  g_{i_1} (ba) (ab)^n (ba)^{j_1}$ and $w_{i_1} = g_{i_1} (ba)^{n+j_1}$ for $1 \le j_1 <  n$.

Let $i_2$ be the smallest among indices $i$ such that  $w_i$ ends with $ba (ab)^n (ba)^{j_1}$. Evidently $i_2 \le i_1 -1$, so it is impossible to have $w_{i_2-1} = g_{i_2} (ba)^{n+j_1}$ and $w_{i_2} = g_{i_2} (ba) (ab)^n (ba)^{j_1}$. Thus, we must have $w_{i_2 -1} =  g_{i_2} (ba) (ab)^{2n} (ba)^{j_1}$ and $w_{i_2} = (ba)^n (ab)^{n} (ba)^{j_1}$.

Continuing in the same manner we can get words ending with $$(ba) (ab)^{\alpha n} (ba)^{j_1}$$ for any $\alpha > 0$. Since $m$ is not divisible by $n$, which means that there always will be a previous element and potential to increase the power in the middle. However, powers of $ab$ in the chain are bound by $\gamma$, which gives us a contradiction.
\end{proof}

From the two previous propositions and Corollary \ref{cor_tab} we can infer the following.

\begin{corollary}
The semigroups $T_n$, $n \ge 1$, are not LEF.
\end{corollary}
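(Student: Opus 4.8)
The plan is to assemble the corollary from the two immediately preceding propositions for the range $n \ge 2$, and to dispose of the boundary case $n = 1$ separately. First I would record the identification $T_1 = Sg\langle a,b \mid (ba)(ab)^1 = (ba)^1 \rangle = Sg\langle a,b \mid baab = ba \rangle = T$, so that the $n=1$ instance is literally the semigroup $T$, which is already known not to be LEF by Corollary~\ref{cor_tab}. This is necessary because both preceding propositions carry the standing hypothesis $n \ge 2$ and hence say nothing directly about $n = 1$.

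For $n \ge 2$ I would argue by contradiction. Suppose $T_n$ is LEF. The foregoing proposition supplies elements $x = a$, $y = b$ of $T_n$ satisfying $(yx)(xy)^n = (yx)^n$, so the hypotheses of Proposition~\ref{prop_prob} are met with this choice of $x,y$. Applying Proposition~\ref{prop_prob} then yields a single power $m$ with $m < n^2 + 2n$, $n \nmid m$, and $(yx)^2(xy)^m = (yx)(xy)^m(yx)$.

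This contradicts the conclusion of the foregoing proposition, which asserts that for the very same $x = a$, $y = b$ one has $(yx)^2(xy)^m \neq (yx)(xy)^m(yx)$ for \emph{every} $m < n^2 + 2n$ with $n \nmid m$. The contradiction forces $T_n$ not to be LEF for $n \ge 2$, and together with the $n=1$ case this covers all $n \ge 1$.

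Since all the substantive work is already carried out upstream — the LEF-theoretic commutation restriction encoded in Proposition~\ref{prop_prob}, and the rewriting/normal-form analysis that rules out the commutation relation inside $T_n$ — the corollary itself is essentially bookkeeping. The only genuine point to watch is the matching of quantifiers: Proposition~\ref{prop_prob} guarantees the existence of one offending exponent $m$, whereas the $T_n$-proposition rules out \emph{all} admissible $m$, so the two statements dovetail precisely to produce the contradiction. The main (and only) obstacle worth flagging is therefore the edge case $n = 1$, which sits outside the scope of both propositions and must be routed through the separate fact that $T_1 = T$ is non-LEF.
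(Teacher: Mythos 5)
Your proposal is correct and matches the paper's own (very terse) justification, which likewise derives the $n\ge 2$ cases from the contradiction between Proposition~\ref{prop_prob} and the preceding proposition on $T_n$, and routes $n=1$ through Corollary~\ref{cor_tab} via the identification $T_1=T$. Your explicit handling of the quantifier matching and the $n=1$ edge case is a faithful expansion of what the paper leaves implicit.
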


\begin{remark}
It can be shown that the well-known Baumslag--Solitar groups $BS(n-1,n) = Gp \langle a,b \mid a b^{n-1} a^{-1} = b^{n+1} \rangle$, $n \ge 3$, fail the condition of Proposition \ref{prop_prob} for $x = a^{-1}$ and $y = ab$, meaning that they contain a non-embeddable subset  which is a not a ``group'' one, i.e. the semigroup generated by it is not a group. 
\end{remark}

A natural avenue of further study is to understand the nature of non-embeddable sets: while it is clear that expansions of non-embeddalbe sets are non-embeddable and existence of some non-embeddable sets inside a semigroup implies the existence of certain others, the classification of such sets or even deciding whether there is a finite number of ``independent" non-embeddable sets remain open problems.

\section{LEF and LWF}
While all LEF semigroups are LWF, we can demonstrate that only some of the non-LEF semigroups discussed above are non-LWF. To do this, we will require several auxiliary definitions and results.

\begin{definition}\label{def_tight}
Consider an LWF semigroup $S$ and its finite subset $H$. We say that the pair of semigroup $D_H$ and function $d_H$ satisfying the requirements of Definition \ref{def_lwf} is {\em tight} if there exists no such function $r_H: D_H \rightarrow S$ that 

\begin{enumerate}
\item $H \subseteq D_H t_H$ and for all $x', y' \in D_H$ with $x'r_H,y' r_H \in H$ it holds that $(x'y')r_H = (x'r_H)(y'r_H)$;
\item $H (r_H)^{-1} \subsetneq H (d_H)^{-1}$.
\end{enumerate}
\end{definition}

\begin{proposition}\label{prop_extight}
Let $S$ be a LWF semigroup and $H$ be a finite subset of $S$. Then there exist a semigroup $D_H$ and a map $d_H: D_H \rightarrow S$ satisfying the requirements of Definition \ref{def_lwf} such that $(D_H, d_H)$ is a tight pair.
\end{proposition}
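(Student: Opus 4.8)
The plan is to obtain the tight pair from an arbitrary LWF pair by a minimality argument, keeping the finite semigroup $D_H$ fixed throughout and only adjusting the function into $S$. The starting point is the LWF property of $S$ applied to $H$: it furnishes at least one pair $(D_H, d_H)$ satisfying the requirements of Definition \ref{def_lwf}. The crucial structural remark is that the notion of tightness in Definition \ref{def_tight} compares $d_H$ only against other functions $r_H : D_H \to S$ with the \emph{same} domain $D_H$; hence it suffices to search for an optimal function on this one fixed finite semigroup.

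Concretely, I would consider the family
$$\mathcal{F} = \{ g : D_H \to S \mid H \subseteq D_H g \text{ and } (x'y')g = (x'g)(y'g) \text{ whenever } x'g, y'g \in H \}$$
of all functions on $D_H$ that satisfy the two requirements of Definition \ref{def_lwf}. This family is non-empty, since $d_H \in \mathcal{F}$. For each $g \in \mathcal{F}$ the preimage $H g^{-1}$ is a subset of the finite set $D_H$, and the condition $H \subseteq D_H g$ forces $g$ to map $H g^{-1}$ onto $H$, so $|H g^{-1}| \ge |H|$. Thus the integers $\{ |H g^{-1}| : g \in \mathcal{F} \}$ form a non-empty set of non-negative integers, and I would pick $d_H^\ast \in \mathcal{F}$ realising its minimum.

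It then remains to check that $(D_H, d_H^\ast)$ is tight. Suppose not; then there is some $r_H : D_H \to S$ witnessing failure of tightness, i.e.\ satisfying condition (1) of Definition \ref{def_tight} --- which is exactly membership in $\mathcal{F}$ --- together with the strict inclusion $H (r_H)^{-1} \subsetneq H (d_H^\ast)^{-1}$ of condition (2). Since both sides are finite subsets of $D_H$, strict inclusion yields $|H (r_H)^{-1}| < |H (d_H^\ast)^{-1}|$, contradicting the minimality of $d_H^\ast$. Hence no such $r_H$ exists and the pair is tight.

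I do not expect a genuine obstacle here: the whole argument is a well-foundedness (minimality) argument over a finite ground set, so no descending-chain or transfinite machinery is needed. The only points that need care are the two bookkeeping facts that make the minimum meaningful --- that $\mathcal{F}$ is non-empty (guaranteed by the LWF property) and that the preimage cardinality is bounded below (so the minimum is attained) --- together with the elementary observation that strict set inclusion among finite sets strictly decreases cardinality, which is precisely what lets the minimality of $d_H^\ast$ rule out any tightening function $r_H$.
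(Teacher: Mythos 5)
Your proposal is correct and is essentially the paper's own argument: the paper runs the same well-foundedness idea as a terminating descending chain of functions on the fixed finite $D_H$ (each step strictly shrinking the finite preimage $H(d_H)^{-1}$, bounded below by $|H|$), whereas you phrase it as directly selecting a minimizer of the preimage cardinality. Both are valid and equivalent.
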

\begin{proof}
Consider a finite semigroup $D_H$ and a map $d'_H: D_H \rightarrow S$ satisfying the requirements of Definition \ref{def_lwf}. If the pair $D_H, d'_H$ is not tight, there exists $r_H$ such that the pair $D_H,r_H$ also satisfies the requirements of the definition and  $H (r_H)^{-1} \subsetneq H (d'_H)^{-1}$. Note that $H (d_H)^{-1}$ is finite as a subset of a finite structure $D_H$. If $D_H,r_H$ is not tight, we can continue this process inductively with finding another function $r'_H$  such that $D_H,r'_H$ also satisfies the requirements of the definition and  $H (r'_H)^{-1} \subsetneq H (r_H)^{-1}$, and so on. Due to the fact that the size of the pre-image of $H$ cannot be smaller than $|H|$, the process will stop eventually, resulting in a tight pair $D_H, d_H$.
\end{proof}

We can establish a stricter structural property for tight pairs.

\begin{definition}
Let $S$ be an infinite LWF semigroup, $H = \{h_1,\ldots,h_t\}$ be its finite subset, $D_H,d_H$ be a tight pair and $\{h'_1,\ldots,h'_t\}$ be a set of elements of $D_H$ such that $h_i = h'_i d_H$. We say that a product $h'_{i_1} \ldots h'_{i_k}$, $k \ge 1$, $i_1,\ldots, i_k \in \{1,\ldots,t\}$ is {\em accurate} if

\begin{enumerate}
\item $h'_{i_1} \ldots h'_{i_k} \in H (d_H)^{-1}$;

\item One of the following holds:

\begin{itemize}
\item[a.] $k=1$;
\item[b.] There exists an index $j$, $1 \le j <k$ such that $h'_{i_1} \ldots h'_{i_j}$ is accurate and $h'_{i_{j+1}} \ldots h'_{i_k}$ is accurate.
\end{itemize}
\end{enumerate}
\end{definition}

\begin{proposition}\label{prop_tight}
Let $S$ be an infinite LWF semigroup, $H = \{h_1,\ldots,h_t\}$ be its finite subset, $D_H,d_H$ be a tight pair and $\{h'_1,\ldots,h'_t\}$ be a set of elements of $D_H$ such that $h_i = h'_i d_H$. Then  for every element $w' \in H (d_H)^{-1}$  there exist a product $h'_{i_1} \ldots h'_{i_k}$ equal to $w'$, $k \ge 1$, $i_1,\ldots, i_k \in \{1,\ldots,t\}$ which is accurate.
\end{proposition}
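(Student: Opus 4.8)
The plan is to argue by contradiction, exploiting the tightness of the pair $(D_H, d_H)$. Let $E$ denote the set of all $w' \in H(d_H)^{-1}$ for which there exists an accurate product equal to $w'$. First I would record two closure facts about $E$. Each $h'_i$ is itself an accurate product (clause 2a with $k=1$, together with $h'_i \in H(d_H)^{-1}$ since $h'_i d_H = h_i \in H$), so $h'_i \in E$ for every $i$. More importantly, $E$ is closed under those products that remain inside the preimage: if $x', y' \in E$ and $x'y' \in H(d_H)^{-1}$, then writing $x'$ and $y'$ as accurate products and concatenating them yields a product whose value is $x'y'$, which is accurate by clause 2b (splitting exactly at the junction of the two factors), whence $x'y' \in E$.

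Assuming the conclusion fails, I would have $E \subsetneq H(d_H)^{-1}$, and I would use this to build a competitor $r_H \colon D_H \to S$ violating tightness. Since $S$ is infinite, fix $s \in S \setminus H$ and let $r_H$ agree with $d_H$ everywhere except on $H(d_H)^{-1} \setminus E$, where I set $x' r_H = s$. By construction $H(r_H)^{-1} = E$: elements of $E$ still map into $H$, elements of $H(d_H)^{-1}\setminus E$ now map to $s \notin H$, and elements outside $H(d_H)^{-1}$ map exactly as under $d_H$, hence outside $H$. Thus $H(r_H)^{-1} = E \subsetneq H(d_H)^{-1}$, giving clause 2 of Definition \ref{def_tight}. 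Moreover $H \subseteq D_H r_H$, because each $h_i = h'_i d_H = h'_i r_H$ as $h'_i \in E$.

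The main work is verifying that $r_H$ still satisfies the multiplicative requirement. Suppose $x', y' \in D_H$ with $x' r_H, y' r_H \in H$. Since $H(r_H)^{-1} = E$, this forces $x', y' \in E$, so $x' r_H = x' d_H$ and $y' r_H = y' d_H$ both lie in $H$, and the LWF property of the original pair gives $(x'y') d_H = (x' d_H)(y' d_H)$. It then remains to check $(x'y') r_H = (x'y') d_H$, i.e.\ that $r_H$ and $d_H$ agree at the point $x'y'$. Here I split into two cases according to whether $x'y' \in H(d_H)^{-1}$: if it is, then by the closure fact above $x'y' \in E$, where $r_H = d_H$ by definition; if it is not, then $x'y' \notin H(d_H)^{-1} \setminus E$, so again $r_H = d_H$ at $x'y'$. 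In either case $(x'y') r_H = (x'y') d_H = (x' r_H)(y' r_H)$, as required. Hence $(D_H, r_H)$ satisfies Definition \ref{def_lwf} and strictly shrinks the preimage of $H$, contradicting tightness; therefore $E = H(d_H)^{-1}$.

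I expect the only genuinely delicate point to be this last verification. The tightness competitor $r_H$ is obtained simply by pushing the ``unexplained'' preimages $H(d_H)^{-1}\setminus E$ off of $H$, and one must confirm that the multiplicative law is never tested at a point where $r_H$ disagrees with $d_H$. The closure of $E$ under in-preimage products is precisely what guarantees this, so establishing that closure cleanly is the key preparatory step.
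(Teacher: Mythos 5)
Your proposal is correct and follows essentially the same route as the paper: the paper also assumes the set of ``unexplained'' preimages (your $H(d_H)^{-1}\setminus E$, its $U$) is nonempty, redirects those elements to a fixed $s \in S \setminus H$, and uses closure of accurate products under concatenation to verify the new map still satisfies Definition \ref{def_lwf}, contradicting tightness. No gaps.
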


\begin{proof}
Choose an arbitrary $s \in S \setminus H$ and denote by $U$  the subset of $H (d_H)^{-1}$ consisting of elements which cannot be presented with an accurate product. Assume that $U$ is non-empty.

Define the function $r_H: D_H \rightarrow S$ as follows:
\begin{align*} x' r_H & = \begin{cases} x' d_H, & \text{if }x' \not\in U, \\ s, & \text{if } x' \in U.  \end{cases}
\end{align*}
Our goal is to demonstrate that $D_H, r_H$ satisfy the properties of Definition \ref{def_lwf}.

Firstly, since $h'_i r_H = h'_i d_h = h_i$ and $h'_i \not\in U$, we have $H \subset D_H r_H$. Secondly, for all $x', y' \in D_H$ with $x'r_H,y' r_H \in H$ we have $x'r_H =x' d_H ,y' r_H = y' d_H$ and $x'y'$ is either not in  $H (d_H)^{-1}$, meaning that it is outside $U$ as well and $(x'y')r_H = (x' y') d_H$, or $x'y'$ is in $H (d_H)^{-1}$ but not in $U$ as $x',y'$ not in $U$, meaning that the product of the accurate products representing them (which is accurate itself) equals to $x'y'$, allowing us to conclude that $(x' y') r_H =  (x' y') d_H$. Thus we have $(x'r_H)(y' r_H) = (x'd_H)(y' d_H) = (x'y' )d_H= (x'y' )r_H$.

Thus, the pair $D_H,d_H$ is not tight as  $H (r_H)^{-1} = H (d_H)^{-1} \setminus U \subsetneq H (d_H)^{-1}$. The resulting contradiction means that our assumption was incorrect and $U$ is empty.
\end{proof}

\begin{proposition}
The bicyclic monoid $B = \langle a,b \mid ab= 1\rangle$ is not an LWF semigroup.
\end{proposition}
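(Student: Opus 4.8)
The plan is to mirror the non-embeddability argument for $\{1,a,b,ba\}$ (the configuration underlying the failure of LEF for $B$), but to run it inside a finite \emph{wrapping} rather than a finite embedding. I would assume $B$ is LWF, fix the finite subset $H=\{1,a,b,ba\}$, and invoke Proposition \ref{prop_extight} to obtain a \emph{tight} pair $(D_H,d_H)$ together with the accurate-product description of Proposition \ref{prop_tight}. Fixing preimages $\iota',p',q',f'_0$ of $1,a,b,ba$, the defining property of $d_H$ immediately gives $d_H(p'q')=ab=1$ and $d_H(q'p')=ba$, and more generally that $d_H$ is multiplicative on any pair whose images lie in $H$; in particular every element mapping to $1$ acts image-preservingly on preimages of $H$.

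Next I would pass to idempotents using finiteness of $D_H$. Writing $c'=p'q'$ and $e'=q'p'$, all powers of $c'$ map to $1$ and all powers of $e'$ map to $ba$, so for a suitable common exponent $m$ the powers $\epsilon'=(c')^{m}$ and $f'=(e')^{m}$ are idempotent with $d_H(\epsilon')=1$ and $d_H(f')=ba$; note $\epsilon'\neq f'$. The identities $a\cdot ba=a$ and $ba\cdot b=b$ of $B$ are reflected by the clean intertwining relations $c'^{k}p'=p'e'^{k}$ and $e'^{k}q'=q'c'^{k}$, whence $\epsilon'p'=p'f'$ and $f'q'=q'\epsilon'$. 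Applying Lemma \ref{lem_powerconsist} to $c'$ and $e'$ (whose indices differ by at most one and whose periods coincide, exactly as in the proof of Proposition \ref{prop_prob}) pins down the cyclic behaviour of these powers, which is the quantitative input I expect to need.

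The contradiction I would aim for is a single element $w\in D_H$ that the multiplicativity condition forces to carry two different $B$-values: either a $w$ expressible simultaneously as a product of two preimages of $a$ and as a product of two preimages of $b$ (forcing $d_H(w)=a^{2}=b^{2}$), or a $w$ forced to map to both $1$ and $ba$. Both are genuine phenomena: in a finite transformation model of a truncated bicyclic monoid the ``boundary'' restrictions $z'p'$ and $q'z'$ of $p',q'$ by a deep idempotent $z'$ with $d_H(z')=1$ have coinciding (degenerate) squares, and it is precisely such a coincidence that I would try to manufacture abstractly from $\epsilon',f'$, the intertwining relations, and Lemma \ref{lem_powerconsist}.

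The hard part is exactly the step that is free in the LEF case. There $p'q'$ is a genuine two-sided identity on the relevant elements, so the finite-semigroup principle ``a one-sided inverse is two-sided'' forces $q'p'=p'q'$ and contradicts injectivity. In the LWF setting $d_H$ maps \emph{out of} $D_H$, so equalities of images are not equalities of elements: $d_H(p'q')=1=d_H(\epsilon')$ does \emph{not} yield $p'q'=\epsilon'$, and indeed $\epsilon'$ and $f'$ can be $\mathcal D$-related yet incomparable, consistently with all image data, so this naive route produces no contradiction. The whole purpose of tightness and accurate products is to restore rigidity—every preimage of $H$ is a controlled product of the fixed generators—and the crux of the proof, which I expect to be the main obstacle, is to use this rigidity to upgrade one of the above image-level coincidences into a genuine equality of elements of $D_H$, thereby producing the element $w$ with two incompatible values.
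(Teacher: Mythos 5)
There is a genuine gap: your write-up correctly assembles the machinery (the tight pair from Proposition \ref{prop_extight}, accurate products from Proposition \ref{prop_tight}, the passage to idempotent powers of $p'q'$ and $q'p'$, and the intertwining relations), but it stops exactly where the proof has to start. You candidly flag ``upgrading an image-level coincidence to an equality in $D_H$'' as the main obstacle and leave it unresolved, so no contradiction is actually derived. Moreover, the target you aim at --- a single element $w\in D_H$ forced to carry two incompatible values under $d_H$ --- is not what the argument ultimately produces, and it is not clear it can be produced: nothing in the LWF axioms prevents $d_H$ from being single-valued, so the contradiction has to come from the \emph{structure} of $D_H$, not from a clash of values.

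The step you are missing is the following. Choose $n$ with $(x'y')^n$ idempotent and set $a'=(x'y')^{2n-1}x'$, $b'=y'(x'y')^{n}$; these are preimages of $a$ and $b$ satisfying $a'b'a'=a'$ and $b'a'b'=b'$ outright. The role of tightness and accurate products is then \emph{not} to force two values onto one element, but to promote $a'b'$ to a genuine two-sided identity of the subsemigroup $\langle a',b'\rangle$ of $D_H$: applying Proposition \ref{prop_tight} to the preimage set $\{a'a'b'b',\,a'a'b',\,b',\,b'a'a'b'\}$ of $\{1,a,b,ba\}$, each generator of an accurate product is fixed by right multiplication by $a'b'$, hence so is every element of $H(d_H)^{-1}$, giving $a'a'b'=a'$ and, symmetrically, $a'b'b'=b'$. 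Since $a'b'$ and $b'a'$ have distinct images $1$ and $ba$, they are distinct, so $b'a'$ is not the identity of $\langle a',b'\rangle$; by the classical characterisation \cite[Lemma 1.31]{ClP} this subsemigroup is isomorphic to the bicyclic monoid, contradicting the finiteness of $D_H$. This ``plant an infinite copy of $B$ inside the finite wrapper'' endgame is the idea your proposal lacks.
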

\begin{proof}
Assume that $B$ is LWF. Consider the set $H = \{ 1,a,b,ba\}$ and consider a finite semigroup $D_H$ and a map $d_H: D_H \rightarrow B$ satisfying the requirements of Definition \ref{def_lwf} such that $D_H, d_H$ is a tight pair.

Let $x',y'$ be arbitrary elements of $D_H$ such that $x' d_H = a$, $y' d_H = b$. We have $(x'y') d_H = (x' d_H) (y' d_H) = a b =1$, by the property of $d_H$ and by the same property any power of $x' y'$ maps to $1$.

As $D_H$ is finite, there exists a power $n$ such that $(x' y')^n$ is idempotent.

Consider elements $a' = (x' y')^{2n-1} x'$ and $b' = y' (x' y')^{n}$. By the multiplicative property we have $a'd_H = a$ and $b' d_H =b$. Moreover, $a' b' a' =  (x' y')^{2n-1} x'  y' (x' y')^{n}  (x' y')^{2n-1} x' = (x' y')^{2n-1} x'  = a'$ and similarly $b' a' b' = b'$.

We want to demonstrate that $a' b'$ acts as an identity for the subsemigroup $\langle a', b'\rangle$ of $D_H$, while $b' a'$ is not an identity for this subsemigroup. The latter follows from the former and the fact that $a' b' \neq b' a'$ as they have different images ($1$ and $ba$, respectively) under $d_H$.

To prove the former, consider $a'$. By Proposition \ref{prop_tight} applied to set of pre-images $\{a'a'b'b', a'a' b', b', b'  a'a' b'\}$, there exists an accurate product of these pre-images equal to $a'$. Since $b' a' b' = a' b'$,  $a' b'$ acts as an identity on the right for each of these pre-images, meaning that the same is true for their product $a'$, i.e. $a' a' b' = a'$.

Similarly, $ a' b' b' = b'$. As we already know that $a' b' a' = a'$ and  $b' a' b' = b'$, $a' b'$ is indeed an identity for $\langle a', b'\rangle$. 

Thus, by \cite[Lemma 1.31]{ClP} the semigroup $\langle a', b'\rangle$ is isomorphic to $B$, meaning that the semigroup $D_H$ cannot be finite. This contradicts our initial assumption.
\end{proof}

\begin{proposition}
The semigroup $A = \langle a,b \mid aab = a\rangle$ is not LWF.
\end{proposition}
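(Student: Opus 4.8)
The plan is to mimic the structure of the preceding proof for the bicyclic monoid, adapting it to the defining relation $aab = a$ of $A$. Assuming $A$ is LWF, I would consider the finite subset $H = \{a, b, ab, aba\}$ (the same non-embeddable set used in Example \ref{ex_aab}) and, invoking Proposition \ref{prop_extight}, fix a tight pair $D_H, d_H$ with preimages $a', b'$ of $a, b$ respectively. The key is to build, from arbitrary preimages, a \emph{canonical} pair of elements $p, q \in D_H$ with $p d_H = a$, $q d_H = b$ satisfying the clean idempotent-like identity $ppq = p$ that mirrors $aab = a$. The main tool is finiteness of $D_H$: some power of a suitable element is idempotent, and by multiplying appropriately one can force the relation $ppq = p$ to hold \emph{on the nose} in $D_H$ for this normalised pair, just as $a'b'$ was forced to act as a two-sided identity in the bicyclic case.

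Concretely, starting from arbitrary $x', y'$ with $x' d_H = a$ and $y' d_H = b$, I would note $(x'x'y') d_H = a$ by the multiplicative property, so $x'x'y'$ is another preimage of $a$. Taking a high power $n$ so that the relevant element is idempotent, I would set $p = (x'x'y')^n x'\cdots$ (the exact shift to be pinned down by the same power arithmetic as in the $B$ proof) and $q$ a correspondingly adjusted preimage of $b$, arranged so that $ppq = p$ holds in $D_H$. The crux then is to apply Proposition \ref{prop_tight} to a carefully chosen family of preimages of $a$ — products in $p, q$ all mapping under $d_H$ to $a$ — to conclude that $p$ itself admits an accurate product representation; since $pq$ (or the appropriate combination) acts as a right identity on each factor, it acts as one on $p$, yielding the required on-the-nose relation without any ``extra'' elements appearing.

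Once $ppq = p$ holds exactly in the finite semigroup $\langle p, q \rangle \subseteq D_H$, I would derive a contradiction exactly as in the LEF argument of Example \ref{ex_aab}: finiteness gives $\pi, \pi' > 0$ with $p^\pi = p^{\pi + \pi'}$, whence multiplying $ppq = p$-derived identities by $q^\pi$ forces $pq = p^{\pi'}$, so $pq$ commutes with $p$. But then $(pq)p = pqp$ and $p(pq) = ppq = p$ would coincide, forcing $pqp$ and $p$ to have the same image under $d_H$; however $pqp \,d_H = aba \neq a = p\, d_H$. Since the accurate-product machinery guarantees these products genuinely sit inside $H(d_H)^{-1}$ with the correct images, this separation of $a$ and $aba$ is the contradiction that finishes the proof.

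The hard part will be the second paragraph: choosing the normalising powers so that $ppq = p$ holds \emph{identically} in $D_H$ rather than merely up to $d_H$, and selecting exactly the right finite family of preimages of $a$ to feed into Proposition \ref{prop_tight}. In the bicyclic case the two-sided-identity structure made the accurate-product bookkeeping transparent; here the relation $aab = a$ is one-sided and non-regular, so I expect the delicate step to be verifying that $pq$ (or $ppq$) acts as a right identity on \emph{each} chosen preimage, so that Proposition \ref{prop_tight} transfers this to their product $p$. Getting that bookkeeping exactly right — ensuring every intermediate product lands in $H(d_H)^{-1}$ so the accurate decomposition is available — is where the real work lies.
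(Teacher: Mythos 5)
Your overall architecture (reduce to the non-embeddable set $H=\{a,b,ab,aba\}$, use tightness and Proposition \ref{prop_tight}, and finish with the commutation contradiction from Example \ref{ex_aab}) is reasonable, but the step you yourself flag as ``the hard part'' --- manufacturing $p\in a\,d_H^{-1}$, $q\in b\,d_H^{-1}$ with $ppq=p$ holding \emph{as elements of} $D_H$ --- is a genuine gap, and the specific tools you propose for it do not work here. First, the idempotent-power trick from the bicyclic proof relies on the fact that $x'y'$ there maps to the idempotent $1$, so all its powers stay in $H(d_H)^{-1}$ and an idempotent power can be inserted freely. In $A$ there are no idempotents at all (this is proved in Example \ref{ex_aab}), and the candidate elements escape control immediately: $x'x'y'\in a\,d_H^{-1}$, but $(x'x'y')^2$ maps to $a^2\notin H$, so for $n\ge 3$ the image of $(x'x'y')^n$ under $d_H$ is not determined by Definition \ref{def_lwf} at all. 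Your proposed $p=(x'x'y')^n x'\cdots$ therefore need not be a preimage of $a$, and the whole computation collapses. Second, the ``right identity transferred through accurate products'' mechanism also fails structurally: in the bicyclic case $a'b'$ could be made to fix \emph{every} chosen generator-preimage on the right, because $1$ is a two-sided identity. In $A$ one has $b(ab)=bab\neq b$ and $(ab)(ab)=abab\neq ab$, so no preimage of $b$ or of $ab$ can ever be fixed by right multiplication by a preimage of $ab$ (apply $d_H$ to see this). Since the only way to split $a$ inside $H$ is $a=a\cdot ab$, every accurate decomposition of a preimage of $a$ has a preimage of $ab$ as its rightmost factor, which is exactly where the right-identity property breaks; and the alternative bracketing $(u'v')(x'y')$ only tells you the product again lies in $a\,d_H^{-1}$, not that it equals $u'v'$. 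So the normalisation cannot be completed along the lines you sketch.

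The paper avoids on-the-nose identities between individual elements entirely. It instead proves that the full preimage sets $Q_x=x\,d_H^{-1}$ satisfy $Q_xQ_y=Q_{xy}$ for every product $xy$ landing in $H$: the inclusions $Q_xQ_y\subseteq Q_{xy}$ are immediate, the reverse inclusions $Q_{ab}\subseteq Q_aQ_b$ and $Q_a\subseteq Q_aQ_{ab}$ follow from Proposition \ref{prop_tight} plus the multiplication table, and the delicate equality $Q_{ab}Q_a=Q_{aba}=Q_{aba}Q_{ab}$ is obtained by a second tightness argument (redirecting the hypothetical elements of $Q_{aba}\setminus Q_{ab}Q_a$ to $a^2$ and contradicting tightness of $(D_H,d_H)$). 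Then $h\mapsto Q_h$ is an injective multiplicative map from $H$ into the finite power semigroup $\mathcal P(D_H)$, contradicting the non-embeddability of $H$ from Proposition \ref{prop_reasonaab}. If you want to salvage your approach, you would need to replace the element-level relation $ppq=p$ by a set-level statement of this kind; as written, the proof does not go through.
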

\begin{proof}
Assume that $A$ is LWF. Consider its subset $H = \{a,b,ab,aba \}$ and a finite semigroup  $D_H$ and a map $d_H: D_H \rightarrow A$ satisfying the requirements of Definition \ref{def_lwf} for $H$ such that $D_H, d_H$ is a tight pair. 

Our goal is to prove that for all $x,y \in H$ such that $xy \in H$ we have $Q_x Q_y = Q_{xy}$, where $Q_x, Q_y$ and $Q_{xy}$ are the sets of pre-images of $x,y,xy$ under $d_H$ and the product on the left-hand side is taken in the sense of the power semigroup $\P(D_H)$. Once we have established that, it will follow that $F_H = \P(D_H)$ and $f_H: H \rightarrow F_H$ defined by $h \mapsto Q_h$ satisfy the requirements of Definition \ref{def_lef} as $F_H$ is finite, $Q_h$ are distinct for different $h$ and the multiplication property is exactly described above. However, by Proposition \ref{prop_reasonaab} it is impossible to find such $F_H$ and $f_H$.

Consider the multiplication table of the elements of $H$.

\begin{tabular}
{| c || c | c | c | c |}
\hline
          & $a$ & $b$ & $ab$ & $aba$ \\
\hline \hline
$a$ & $a^2$ & $ab$ & $a$ & $a^2$ \\
\hline
$b$ & $ba$ & $b^2$ & $bab$ & $baba$ \\
\hline
$ab$ & $aba$ & $abb$ & $abab$ & $ababa$ \\
\hline
$aba$ & $aba^2$ & $abab$ & $aba$ & $aba^2$ \\
\hline
\end{tabular}

Fix an arbitrary pre-image $a'$ of $a$ and $b'$ of $b$ under $d_H$. By the multiplication property, $a'a'b'$ is a pre-image of $a$, $a' b'$ is a pre-image of $ab$ and $a'b'a'$ is a pre-image of $aba$ under $d_H$ as well.

First, let us prove that $Q_a Q_b = Q_{ab}$. From the multiplication property we have that $Q_a Q_b \subseteq Q_{ab}$. To see the reverse inclusion, consider $z' \in Q_{ab}$. By Proposition \ref{prop_tight} there exists an accurate product of elements in $\{a',b',a'b',a'b'a'\}$ equal to $z'$. If the product consists of one term, i.e. $z' = a'b'$, we have $z' \in Q_a Q_b$. Otherwise $z' = u' v'$, where both $u'$ and $v'$ are accurate products, so in particular $u'd_H, v'd_H \in H$ and by the multiplication property $(u'd_H) (v'd_H) = z'd_H =ab$. Following the multiplication table, this means that $u' d_H = a$ and $v' d_H = b$, and $z' \in Q_a Q_b$ again. Thus, $Q_a Q_b \supseteq Q_{ab}$ and, furthermore, $Q_a Q_b = Q_{ab}$.

Second, let us prove that $Q_{a} Q_{ab} = Q_{a}$. From the multiplication property we have that $Q_a Q_{ab} \subseteq Q_{a}$. To see the reverse inclusion, consider $z' \in Q_{a}$. By Proposition \ref{prop_tight} there exists an accurate product of elements in the set $\{a'a'b, b' ,a'b', a'b'a'\}$ equal to $z'$. If the product consists of one term, i.e. $z' = a'a'b'$, we have $z' \in Q_a Q_{ab}$. Otherwise $z' = u' v'$, where both $u'$ and $v'$ are accurate products, so in particular $u'd_H, v'd_H \in H$ and by the multiplication property $(u'd_H) (v'd_H) = z'd_H =a$. Following the multiplication table, this means that $u' d_H = a$ and $v' d_H = ab$, and $z' \in Q_a Q_{ab}$ again. Thus, $Q_a Q_{ab} \supseteq Q_{a}$ and, furthermore, $Q_a Q_{ab} = Q_{a}$.

Finally, we need to prove that $Q_{ab} Q_a = Q_{aba} = Q_{aba} Q_{ab}$. Note that it follows from Proposition \ref{prop_tight} that $Q_{aba} = Q_{ab} Q_{a} \cup Q_{aba} Q_{ab}$ similarly to the above. We will show $Q_{ab} Q_a = Q_{aba} Q_{ab}$ from which the initial equality will follow.

Assume there exist $w' \in Q_{aba} \setminus Q_{ab}Q_{a}$ (in particular, $w' \neq a'b'a'$). Denote this property by $(*)$. By Proposition \ref{prop_tight} we know that $w' = u' v'$ where both $u'$ and $v'$ are in $H d_H^{-1}$, which leaves us only the possibility $u' \in Q_{a'b'a'}$ and $v' \in Q_{a'b'}$. Note that $u'$ must satisfy $(*)$ as well as otherwise if $u' = s' t'$ where $s' \in Q_{ab}$ and $t' \in Q_a$ we have $w' = s' (t' v')$ with $t'v' \in Q_{a} Q_{ab} = Q_{a}$.

Let us demonstrate that $D_H, d_H$ is not tight by constructing another function $r_H$ as follows: 
\begin{align*}
 x' r_H & = \begin{cases} x' d_H, & \text{if }x' \text{ does not satisfy } (*), \\ a^2, & \text{if } x' \text{ satisfies } (*).  \end{cases}
\end{align*}

Our goal is to demonstrate that $D_H, r_H$ satisfies the properties of Definition \ref{def_lwf}.

Since $(*)$ is only applicable to elements of $Q_{aba}$ and $a'b'a'$ does not satisfy $(*)$ we have $D_H r_H \supseteq H$. Additionally, for all $x', y' \in D_H$ with $x'r_H,y' r_H \in H$ we have $x'r_H =x' d_H ,y' r_H = y' d_H$, and since $x'$ and $y'$ do not satisfy $(*)$, $x'y'$ does not satisfy $(*)$ as well according to the observation above. Thus $(x' y') r_H =  (x' y') d_H$ and we have $$(x'r_H)(y' r_H) = (x'd_H)(y' d_H) = (x'y' )d_H= (x'y' )r_H.$$ This means that our assumption is incorrect and no element of $Q_{aba}$ satisfies $(*)$, i.e. $Q_{ab} Q_{a} \supseteq Q_{aba} Q_{ab}$. On the other hand, $Q_{ab} Q_{a} = Q_{ab} Q_{a} Q_{ab} \subseteq Q_{aba} Q_{ab}$. Thus, $Q_{aba}Q_{ab} = Q_{ab}Q_{a} = Q_{aba}$.

This together with the initial observation concludes the proof.

\end{proof}

It remains an open question whether all LWF semigroups are LEF.

\section{Inverse semigroups}

The class of inverse semigroups can be seen as an intermediate between general semigroups and groups.

\begin{definition}
A semigroup $S$ is called {\em inverse} if for every element $x \in S$ there exists a unique element $y \in S$ such that $xyx = x$ and $yxy = y$. This element $y$ is denoted by $x^{-1}$.
\end{definition}

One of the classical examples of the inverse semigroups are symmetric inverse monoids (semigroups) which are the sets of partial bijections on a given set $\Sigma$ with semigroup operation being the composition. 

The structural theorem for the inverse semigroups similar to the Cayley Theorem in the group case is known as Wagner--Preston theorem.

\begin{theorem}\cite[Theorem 5.1.7]{How95}
Let $S$ be an inverse semigroup. Then there exists a symmetric inverse semigroup $I_X$ and a monomorphism $\phi$ from $S$ into $I_X$.
\end{theorem}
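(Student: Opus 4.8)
The plan is to imitate Cayley's theorem, representing $S$ by partial bijections of its own underlying set; that is, to take $X = S$ and to build a monomorphism $\phi$ into the symmetric inverse semigroup $I_S$. For each $a \in S$ I would let $\rho_a = a\phi$ be the partial transformation of $S$ with domain $\mathrm{dom}\,\rho_a = Saa^{-1}$ and rule $x\rho_a = xa$ for $x \in Saa^{-1}$. The guiding intuition is that right multiplication by $a$ is only partially invertible, and cutting its domain down to the principal left ideal $Saa^{-1}$ isolates exactly the part on which it behaves bijectively.

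First I would verify that each $\rho_a$ is a genuine partial bijection, so that $\rho_a \in I_S$. Using the identities $aa^{-1}a = a$ and $a^{-1}aa^{-1} = a^{-1}$ one shows that $\mathrm{im}\,\rho_a = Sa^{-1}a$ and that $\rho_a$ carries $Saa^{-1}$ bijectively onto $Sa^{-1}a$; injectivity is immediate because every $x$ in the domain satisfies $x = xaa^{-1}$, so $xa = x'a$ forces $x = x'$. A short computation then identifies the inverse partial bijection as $\rho_{a^{-1}}$, which already foreshadows that $\phi$ will respect inverses.

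Next I would prove that $\phi$ is a homomorphism, i.e.\ that $\rho_a\rho_b = \rho_{ab}$. Wherever both sides are defined the actions agree, since $x \mapsto xa \mapsto xab = x(ab)$, so the only substantive point is that the two partial maps share a common domain. I expect this to be the main obstacle: one must show that $\{x \in Saa^{-1} : xa \in Sbb^{-1}\}$ coincides with $S(ab)(ab)^{-1} = Sabb^{-1}a^{-1}$, and this is precisely where the defining feature of inverse semigroups---that idempotents commute---is indispensable, since it is what licenses the rearrangement of the idempotents $aa^{-1}$, $bb^{-1}$ and $a^{-1}a$ needed to match the two descriptions.

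Finally I would establish injectivity of $\phi$. If $\rho_a = \rho_b$ then in particular their domains agree, giving $Saa^{-1} = Sbb^{-1}$ and hence $aa^{-1} = bb^{-1}$, because distinct idempotents generate distinct principal left ideals (if $Se = Sf$ for idempotents $e,f$ then $ef = e$ and $fe = f$, so $e = ef = fe = f$). Evaluating both maps at the idempotent $aa^{-1}$, which lies in the common domain, then yields $a = aa^{-1}a = aa^{-1}\rho_a = aa^{-1}\rho_b = aa^{-1}b = bb^{-1}b = b$, so $a = b$. Combined with the two preceding steps this delivers the required monomorphism $\phi: S \to I_S$.
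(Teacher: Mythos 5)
Your proposal is correct and is precisely the classical Wagner--Preston representation: the paper itself offers no proof, citing \cite[Theorem 5.1.7]{How95}, and the argument given there is the same one you outline (with domain written as $Sa^{-1}$, which equals your $Saa^{-1}$). All the key points check out --- the domain identity $\{x \in Saa^{-1} : xa \in Sbb^{-1}\} = Sabb^{-1}a^{-1}$ does follow from commuting idempotents, and injectivity via $Saa^{-1}=Sbb^{-1}\Rightarrow aa^{-1}=bb^{-1}$ and evaluation at $aa^{-1}$ is exactly the standard closing step.
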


Similarly to the group and semigroup cases, we can define local embeddability into finite for the class of inverse semigroups.

\begin{definition}\label{def_ilef}
An inverse semigroup $S$ is called {\em locally embeddable into the class of inverse finite semigroups} (an {\em iLEF} semigroup  for short) if for every finite subset $H$ of $S$ there exists a finite inverse semigroup $F^{(i)}_H$ and an injective function $f^{(i)}_H: X \rightarrow F^{(i)}_H$, such that $\forall x,y \in H$ with $xy \in H$ it holds that $(xy)f^{(i)}_H = (xf^{(i)}_H)(yf^{(i)}_H)$.
\end{definition}

\begin{proposition}\label{prop_lefisem}
An inverse semigroup is an LEF semigroup if and only if it is an iLEF semigroup.
\end{proposition}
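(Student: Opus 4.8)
The plan is to prove the two implications separately. One direction is immediate: a finite inverse semigroup is in particular a finite semigroup, so for any finite subset $H$ the pair $F^{(i)}_H, f^{(i)}_H$ witnessing the iLEF condition of Definition \ref{def_ilef} also witnesses the LEF condition of Definition \ref{def_lef}. Hence every iLEF inverse semigroup is an LEF semigroup, and all the work lies in the converse.

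For the converse, suppose $S$ is an inverse LEF semigroup and fix a finite subset $H$. First I would enlarge $H$ to a finite set that is symmetric and records enough data to see the inverse-semigroup identities: put $H_0 = H \cup H^{-1}$ and let $K$ be the finite set of all products of at most five elements of $H_0 \cup \{aa^{-1} : a \in H_0\}$, so that in particular $aa^{-1}$, $a^{-1}a$, $aa^{-1}a$, $aa^{-1}b$ and $abb^{-1}a^{-1}$ lie in $K$ for the elements we care about. Applying Definition \ref{def_lef} to $K$ yields a finite semigroup $F$ and an injective $f: K \to F$ respecting every product that stays inside $K$. The key construction is then a truncated Wagner--Preston representation inside the finite symmetric inverse monoid $I_F$ on the finite set $F$: for each $a \in H_0$ define a partial transformation $\sigma_a$ of $F^1$ by right multiplication, $z \mapsto z\, f(a)$, with domain $F^1 f(aa^{-1})$. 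Since $f(aa^{-1})$ is idempotent in $F$ (its square is tracked) and $f(aa^{-1}a)=f(a)$, a short check shows $\sigma_a$ is a partial bijection onto $F^1 f(a^{-1}a)$ with $\sigma_a^{-1}=\sigma_{a^{-1}}$, so each $\sigma_a$ lies in $I_F$. I would set $F^{(i)}_H = \langle \sigma_a : a \in H_0\rangle$; as its generating set is closed under the inverse operation of $I_F$, this subsemigroup is a finite inverse semigroup, and I define $f^{(i)}_H: H \to F^{(i)}_H$ by $a \mapsto \sigma_a$.

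It then remains to verify the two requirements of Definition \ref{def_ilef}. The multiplication property $\sigma_x \sigma_y = \sigma_{xy}$ for $x,y,xy \in H$ is exactly the Wagner--Preston composition identity; transported through $f$, it reduces to $f(x)f(y)=f(xy)$ together with the domain computation $F^1 f(xx^{-1}) \cap \{z : z f(x) \in F^1 f(yy^{-1})\} = F^1 f(xy(xy)^{-1})$, all of which hold because the underlying inverse-semigroup relations (idempotents commuting, $xyy^{-1}x^{-1} \le xx^{-1}$) involve only elements of $K$, which $f$ preserves.

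I expect injectivity to be the main obstacle. The difficulty is that $F$ itself need not be inverse, so, unlike in a genuine inverse semigroup, $\mathcal{L}$-equivalent idempotent images in $F$ may fail to coincide, and equality of domains alone does not force $a=b$. The way around this is to use the natural partial order on $S$: if $\sigma_a = \sigma_b$ then their domains agree, so $f(aa^{-1})$ lies in the common domain, and evaluating both maps there gives $f(a) = f(aa^{-1})f(b) = f(aa^{-1}b)$, whence $a = aa^{-1}b$, i.e. $a \le b$; evaluating instead at $f(bb^{-1})$ gives $b \le a$ by symmetry, and antisymmetry of the natural partial order forces $a=b$. This establishes injectivity of $f^{(i)}_H$ and completes the proof.
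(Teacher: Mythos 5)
Your proposal is correct and is essentially the paper's argument: the paper also enlarges $H$ to a symmetrised set, applies the LEF witness to a suitable power of it, realises the finite semigroup $F$ by transformations of a finite set (for which your right regular representation on $F^1$ is the canonical choice), and sends $x$ to the partial bijection obtained by restricting $xf$ to $\Im x^{-1}f = F^1 f(xx^{-1})$ --- exactly your $\sigma_x$. The only real divergence is in the injectivity step, where the paper derives $xy^{-1}x=x$ and $y^{-1}xy^{-1}=y^{-1}$ to force $x=y$, while you evaluate at the idempotents to get $a\le b$ and $b\le a$ and invoke antisymmetry of the natural partial order; both work.
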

\begin{proof}
Let $S$ be an iLEF semigroup. It is immediate that it is an LEF semigroup as well.

Let $S$ be an inverse semigroup which is LEF. Let $H$ be a finite subset of $S$. We can expand $H$ to the set $K = H \cup H^{-1} \cup H H^{-1} \cup H^{-1} H$.

Consider the finite semigroup $F_{K^3}$ and function $f_{K^3}$ satisfying Definition \ref{def_lef} for $K^3$. Without the loss of generality we can consider $F_{K^3}$ to be a full transformation semigroup $T_\Sigma$ of a finite set $\Sigma$ as we can embed $F_{K^3}$ in $T_\Sigma$ and carry $f_{K^3}$ over to the bigger semigroup. We will  shorten $f_{K^3}$ to $f$ for brevity in the rest of the proof. 

Now set $F^{(i)}$ to be the inverse semigroup consisting of all partial bijections on $\Sigma$ (the symmetric inverse monoid on $\Sigma$) and $f^{(i)}$ to be a map from $K^3$ to $F^{(i)}$ such that for $x \in K^3$ the image  $x f^{(i)}$ is a partial bijection between $\Im x^{-1}f$ and $\Im x f$ induced by restricting $x f$ on $\Im x^{-1}f$. We will prove that $F^{(i)}, f^{(i)}$ satisfy the conditions of Definition \ref{def_ilef} for the set $K$ (and, in turn, for $H \subseteq K$).

Firstly, we need to demonstrate that $f^{(i)}$ is well-defined, meaning that restricting $x f$ on $\Im x^{-1}f$ provides a bijection between $\Im x^{-1}f$ and $\Im x f$, i.e. $f^{(i)}$ sends elements of $K$ to the inverse semigroup $F^{(i)}$.

Take arbitrary $a,b \in \Im x^{-1}f$ such that $a \neq b$. We claim that  $a (xf) \neq b (xf)$. By definition there exist $c,d \in \Sigma$ such that $c (x^{-1}f) = a$, $d (x^{-1}f) =b$. Note that $x^{-1} x x^{-1} =x^{-1}$, correspondingly $(x^{-1}f) (xf) (x^{-1}f) =x^{-1}f$. We get
\begin{align*}
(a (xf)) (x^{-1}f) = c ((x^{-1}f) (xf) (x^{-1}f))= c (x^{-1}f) = a \neq & \\ b = d (x^{-1}f) = d ((x^{-1}f) (xf) (x^{-1}f)) = (b (xf)) (x^{-1}f), &
\end{align*}
 meaning that $a (xf)$ and $b(xf)$ are separated by the function $(x^{-1}f)$, i.e. distinct. Thus, the restriction of $xf$ to $\Im x^{-1}f$ is an injective map.

To prove that this restriction is surjective, note that $x x^{-1} x =x$, correspondingly  $(xf) (x^{-1}f) (xf) =xf$. This, in particular, means that the image of the restriction of $xf$ to $\Im (xf) (x^{-1}f)$ coincides with the image of $xf$, and, since $\Im (xf) (x^{-1}f) \subset \Im x^{-1}f$, we get that the image of the restriction of $xf$ to $\Im x^{-1}f$ indeed coincides with $\Im xf$ as well.

Note that for $e \in K$ such that $e=e^2$ (and correspondingly $e = e^{-1}$) we have $e f^{(i)}$ to be the identity function on $\Im e f$. This means for $x \in K$ we have $(xf^{(i)})^{-1} = x^{-1} f^{(i)}$ as $x^{-1} f^{(i)}$ is by construction the bijection between $\Im xf$ and $\Im x^{-1}f$ which is inverse to $x f^{(i)}$ due to $(x f)(x^{-1} f) =(xx^{-1}) f$ and $(x^{-1} f)(x f) =(x^{-1}x) f$  being identity functions on $\Im x^{-1}f$ and $\Im xf$ respectively.

Secondly, we need to prove that $f^{(i)}$ is injective on $K$. Assume that for $x,y \in H$ we have $xf^{(i)} = yf^{(i)}$. This means that both $hf^{(i)}$ and $yf^{(i)}$ provide the same partial bijection of $\Sigma$ (in particular, $\Im xf = \Im yf$ and $\Im x^{-1}f = \Im y^{-1}f$), and, by the observation above, $x^{-1}f^{(i)}$ and $y^{-1}f^{(i)}$ provide the same inverse partial bijection too. Consider the element $(xf)(y^{-1}f)(xf)$. Note that by construction $(xf)(x^{-1}f) = (xf)(x^{-1}f^{(i)})$, by our assumption  $(xf)(x^{-1}f^{(i)}) = (hf)(y^{-1}f^{(i)})$ and finally $(xf)(y^{-1}f^{(i)}) =(xf)(y^{-1}f)$ by construction again. This means that $$(xy^{-1}x)f=(xf)(y^{-1}f)(xf) =(xf)(y^{-1}f)(xf) = xf,$$ and as $x,y, xy^{-1}, xy^{-1}x \in K^3$, we get $xy^{-1}x= x$ by Definition \ref{def_lef}. Similarly we can get $y^{-1} x y^{-1} = y^{-1}$, which allows us to conclude that $x^{-1}= y^{-1}$ and $x = y$.

Finally, we need to prove that $\forall x,y \in K$ with $xy \in K$ it holds that $(xy)f^{(i)} = (xf^{(i)})(yf^{(i)})$. 

By construction, we have $xf^{(i)}$ to be the bijection between $\Im x^{-1}f$ and $\Im xf$ induced by $xf$, $yf^{(i)}$ to be the bijection between $\Im y^{-1}f$ and $\Im yf$ induced by $yf$, and $(xy)f^{(i)}$ to be the bijection between $\Im ((xy)^{-1})f$ and $\Im (xy)f$ induced by $(xy)f$. Note that $\Im ((xy)^{-1})f = \Im (y^{-1}x^{-1})f = \Im [(y^{-1}f)(x^{-1}f)]$. This image is a subset of $\Im x^{-1}f$, meaning that $xf^{(i)}$ acts on it exactly as $xf$. Additionally, as $(y^{-1}f)  (x^{-1}f) (xf) = (y^{-1}f) (x^{-1}f) (xf)  (yf) (y^{-1}f) $ we have $\Im[ (y^{-1}f)  (x^{-1}f) (xf) ]\subset \Im y^{-1}f$, meaning that $yf^{(i)}$ acts on the former exactly as $yf$. This allows us to conclude that for $a \in \Im ((xy)^{-1})f$ we have 
\begin{align*}
a((xy)f^{(i)}) & = a ((xy)f) = a((xf)(yf)) = (a(xf))(yf) = (a(xf))(yf^{(i)})  \\ & = (a(xf^{(i)}))(yf^{(i)}) = a((xf^{(i)})(yf^{(i)})),
\end{align*}
i.e. $(xy)f^{(i)}$ is the restriction of $(xf^{(i)})(yf^{(i)})$ onto  $\Im ((xy)^{-1})f$. Since the function $(xf^{(i)})(yf^{(i)})$ is a bijection and the size of its image is less than or equal to $|\Im (xf)(yf)| = |\Im (xy)f|=|\Im (xy)f^{(i)}|$, this means that  $(xy)f^{(i)} = (xf^{(i)})(yf^{(i)})$.
\end{proof}

We can also apply the notion of being LWF to inverse semigroups in a similar manner.

\begin{definition}\label{def_ilwf}
An inverse semigroup $S$ is called {\em locally wrapped by the class of inverse finite semigroups} (an {\em iLWF} semigroup  for short) if for every finite subset $H$ of $S$ there exists a finite  inverse semigroup $D_H$ and a function $d_H: D_H \rightarrow S$, such that $H \subset D_H d_H$ and for all $x', y' \in D_H$ with $x'd_H,y' d_H \in H$ it holds that $(x'y')d_H = (x'd_H)(y'd_H)$.
\end{definition}

In order to establish a partial correspondence between iLEF and iLWF semigroups we require the following results.

\begin{remark}
As the proofs of Proposition \ref{prop_extight} and Proposition \ref{prop_tight} do not change the wrapping semigroup $D_H$, we can apply them to iLWF case as well.
\end{remark}

For the next set of lemmas we will consider {\em symmetrised} subsets $K$ of inverse semigroups, i.e. such that $K = K^{-1}$ and $K \supseteq K K^{-1}$, $K \supseteq K^{-1} K$. Note that for any given finite $H$, the set $K = H \cup H^{-1} \cup H H^{-1} \cup H^{-1} H$ is symmetrised. We will also limit ourselves to infinite semigroups as finite case is trivial (all structures are LEF/LWF).

\begin{lemma}\label{lem_tightinv}
Let $S$ be an infinite iLWF semigroup, $K$ be its finite symmetrised subset, and $D_{K}, d_{K}$ be a tight pair for $K$. Then for all $w' \in K (d_{K})^{-1}$ holds $(w' d_{K})^{-1} = w^{\prime -1} d_{K}$.
\end{lemma}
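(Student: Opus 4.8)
The plan is to deduce the lemma from the \emph{uniqueness of inverses in} $S$ by comparing $d_K$ with a second wrapping map obtained from it by conjugating through the inversion of the finite inverse semigroup $D_K$. Define $d_K^{*}\colon D_K\to S$ by $x'd_K^{*}=(x'^{-1}d_K)^{-1}$. The assertion of the lemma is exactly that $d_K$ and $d_K^{*}$ agree on $Kd_K^{-1}$: indeed $w'd_K=w'd_K^{*}=(w'^{-1}d_K)^{-1}$ says precisely $(w'd_K)^{-1}=w'^{-1}d_K$. So the whole problem becomes: show that the tight map $d_K$ coincides with $d_K^{*}$ on the fibre set $Kd_K^{-1}$.

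First I would check that $d_K^{*}$ is itself a wrapping map for $K$ in the sense of Definition \ref{def_ilwf}. Coverage is easy: given $h\in K$ we have $h^{-1}\in K$ (as $K=K^{-1}$), so some $z'\in D_K$ has $z'd_K=h^{-1}$, whence $(z'^{-1})d_K^{*}=(z'd_K)^{-1}=h$. For the multiplicative property, suppose $x'd_K^{*},y'd_K^{*}\in K$; then $x'^{-1}d_K=(x'd_K^{*})^{-1}\in K$ and likewise $y'^{-1}d_K\in K$, so applying the wrapping property of $d_K$ to the pair $(y'^{-1},x'^{-1})$ and using $(x'y')^{-1}=y'^{-1}x'^{-1}$ in $D_K$ gives $(x'y')d_K^{*}=((y'^{-1}x'^{-1})d_K)^{-1}=((y'^{-1}d_K)(x'^{-1}d_K))^{-1}=(x'd_K^{*})(y'd_K^{*})$. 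A short argument using that $w'\mapsto w'^{-1}$ is an involution also shows $d_K^{*}$ is tight, although I will only need its validity below.

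Next I would run the tightness argument against the set where the two maps disagree. Put $U=\{w'\in Kd_K^{-1}:w'd_K\neq w'd_K^{*}\}$ and let $m_K$ agree with $d_K$ off $U$ and send every element of $U$ to a fixed $s\in S\setminus K$. The key point is that the \emph{agreement set} $Kd_K^{-1}\setminus U$ is closed under the partial product: if $x',y'\notin U$ have $x'd_K,y'd_K\in K$, then by the multiplicative properties of both $d_K$ and $d_K^{*}$ we get $(x'y')d_K=(x'd_K)(y'd_K)=(x'y')d_K^{*}$, so whenever $x'y'\in Kd_K^{-1}$ it again avoids $U$. This is exactly the closure needed to verify, case by case as in the proof of Proposition \ref{prop_tight}, that $m_K$ satisfies the multiplicative requirement of Definition \ref{def_ilwf}. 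Since $K(m_K)^{-1}=Kd_K^{-1}\setminus U$, tightness of $d_K$ will force $U=\varnothing$, which is the lemma — \emph{provided} $m_K$ still covers $K$.

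That last proviso, namely that every $h\in K$ retains a preimage outside $U$ (an \emph{inverse-correct} preimage, one $w'$ with $w'd_K=h$ and $w'^{-1}d_K=h^{-1}$), is the step I expect to be the main obstacle, since tightness alone cannot be used to delete a whole fibre. I would attack it as follows. Idempotents are automatically inverse-correct: if $e'\in D_K$ is idempotent with $e'd_K\in K$ then $e'd_K$ is idempotent in $S$ and $e'^{-1}=e'$, so $e'd_K^{*}=e'd_K$ and $e'\notin U$; moreover, taking any preimage $p'$ of an idempotent $hh^{-1}\in K$ (note $hh^{-1}\in K$ as $K\supseteq KK^{-1}$), a suitable power $p'^{n}$ is idempotent and, since all powers of $hh^{-1}$ equal $hh^{-1}$, still maps to $hh^{-1}$; so the idempotents $hh^{-1},h^{-1}h$ do have inverse-correct preimages. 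Writing $K_{\mathrm{good}}$ for the set of elements of $K$ admitting an inverse-correct preimage, the inverse-closure of the agreement set shows $K_{\mathrm{good}}=K_{\mathrm{good}}^{-1}$, and the product-closure above shows $K_{\mathrm{good}}$ is closed under those products of its members that land back in $K$. The crux is then to propagate inverse-correctness from these idempotents (and from the factors supplied by the accurate decompositions of Proposition \ref{prop_tight}) to every generator $h=h'_i d_K$; this is where the finite inverse structure of $D_K$ — through the idempotents $w'w'^{-1}$ and $w'^{-1}w'$ attached to a given preimage $w'$ of $h$ — must be exploited to manufacture an inverse-correct preimage of $h$, and I would expect the careful treatment of exactly this propagation to be the technical heart of the proof.
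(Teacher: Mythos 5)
Your reduction is sound as far as it goes: $d_K^{*}$ defined by $x'd_K^{*}=(x'^{-1}d_K)^{-1}$ is indeed a wrapping map for $K$ (your verification of coverage and of the multiplicative property is correct), the agreement set $Kd_K^{-1}\setminus U$ is closed under the relevant partial products, and tightness would force $U=\varnothing$ \emph{once every $h\in K$ is known to have a preimage outside $U$}. But you have explicitly left that last step open — you call it ``the step I expect to be the main obstacle'' and ``the technical heart of the proof'' without carrying it out — and it is precisely the non-trivial content of the lemma. Handling idempotents via powers, as you do, is easy; the problem is the non-idempotent $h$, and propagating inverse-correctness ``from the idempotents $hh^{-1}$, $h^{-1}h$'' is not a routine closure argument, so the proof as written has a genuine gap.

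The paper fills this gap with a direct construction rather than a propagation argument. Given $h\in K$ non-idempotent, pick any $x'$ with $x'd_K=h$ and any $y'$ with $y'd_K=h^{-1}$ (possible since $K=K^{-1}$), and let $n$ be such that $(x'y')^{n}$ is idempotent in the finite semigroup $D_K$. Because $hh^{-1}\in K$ and all powers of $x'y'$ map to $hh^{-1}$, the elements $u'=(x'y')^{n}x'$ and $v'=y'(x'y')^{2n-1}$ still map to $h$ and $h^{-1}$ respectively by the multiplicative property; a two-line computation gives $u'v'u'=u'$ and $v'u'v'=v'$, so by uniqueness of inverses in the inverse semigroup $D_K$ we get $v'=u'^{\,-1}$. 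This is exactly the ``inverse-correct preimage'' you need; with it in hand your tightness argument closes (the paper instead feeds these preimages into Proposition~\ref{prop_tight} and inducts on accurate products, but the two finishes are essentially interchangeable). You should supply this construction, or an equivalent one, before the argument can be considered complete.
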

\begin{proof}
We will denote $D_{K}$ and $d_{K}$ as $D$ and $d$ for brevity.

Consider an arbitrary $h \in K$ which is not an idempotent. There exists $x' \in D$ such that $x' d = h$. Since $K$ is symmetrised, $h^{-1}$ is also in $K$, which means that  there exists $y' \in D$ such that $y d = h^{-1}$. Since $D$ is finite, there exists a positive $n$ such that $(x' y')^n$ is a idempotent. Additionally, $x' y'$ maps to $hh^{-1}$ under $d$ by the multiplication property, which means $(uv)^m$ for any positive $m$ maps to $hh^{-1}$ as well. Thus, by the same property, $u' = (x'y')^n x' $ maps to $h$ and $v'=y' (x'y')^{2n-1}$ maps to $h^{-1}$. We claim that $u'$, $v'$ are inverse to each other. To see this, note that $$u' v' u' = (x'y')^n x' y' (x'y')^{2n-1} (x'y')^n x'  = (x'y')^{4n} x' = (x'y')^n x' = u'$$ and $$v ' u' v' = y' (x'y')^{2n-1} (x'y')^n x' y' (x'y')^{2n-1} = y' (x'y')^{5n-1} = y' (x'y')^{2n-1} = v'.$$ Thus, we can find inverse pre-images for $h$ and $h^{-1}$ under $d$.

Consider an arbitrary $h \in K$ which is an idempotent. There exists $z' \in D$ such that $z' d = h$. Since $D$ is finite, there exists a positive $n$ such that $(z')^n$ is a idempotent and by multiplicative properties $(z')^n$ maps to $h$ under $d$. Thus, we can find an idempotent pre-image for idempotent $h \in K$ under $d$.

Now let us apply Proposition \ref{prop_tight} for the set of pre-images $K' = \{h'_1,\ldots,h'_t\}$  such that if $h_i$ is idempotent then $h'_i$ is also idempotent and if $h_i = h_j^{-1}$ then we have $h'_i = h_j^{\prime -1}$ (such pre-images exist by the arguments above). Now consider $w' \in K d^{-1}$. By the proposition  there exists an accurate product $h'_{i_1} \ldots h'_{i_k}$ equal to $w'$. To finish the proof we will use an induction on $k$ in order to demonstrate that $(w' d)^{-1} = w^{\prime -1} d$.

The base. For $k=1$ the statement follows immediately from our choice of $K'$.

The step. Assume the statement holds for $k=1,\ldots,k_0-1$, $k_0 \ge 2$ and consider $k=k_0$. As the product $h'_{i_1} \ldots h'_{i_{k_0}}$ is accurate and $k_0 \ge 2$, there exists an index $j$, $1 \le j <k_0$ such that $h'_{i_1} \ldots h'_{i_j}$ is accurate and $h'_{i_{j+1}} \ldots h'_{i_{k_0}}$ is accurate. Denote the value of the first product by $u'$ and the value of the second product by $v'$. It holds that $u'v' =w'$, $w' d = (u' d) (v' d)$, and by the induction hypothesis we have $(u' d)^{-1} = u^{\prime -1} d$ and $(v' d)^{-1} = v^{\prime -1} d$. Moreover, $u' d, v' d, w' d \in K$, meaning that $(u' d)^{-1}, (v' d)^{-1}, (w' d)^{-1} \in K$, which allows us to check 
\begin{align*}
(w' d)^{-1} & = ((u' v') d)^ {-1} =   ((u' d) (v' d))^ {-1} = (v' d)^{-1} (u' d)^{-1} \\ & = (v^{\prime -1} d) (u^{\prime -1} d) = (v^{\prime -1} u^{\prime -1}) d = w^{\prime -1} d. 
\end{align*}
\end{proof}

\begin{definition}
Let $S$ be an infinite iLWF semigroup, $H$ be its finite subset and $D_H, d_H$ be a tight pair for $H$. We say that $h' \in h d_H^{-1}$ is {\em h-minimal}, $h  \in H$, if  $h' h^{\prime -1} \le h'' h^{\prime \prime -1}$ for any $h'' \in  h d_H^{-1}$ in the natural partial order on $D_H$.
\end{definition}

\begin{lemma}\label{lem_hmininv}
Let $S$ be an infinite iLWF semigroup, $K$ be its finite symmetrised subset, $D_K, d_K$ be a tight pair for $K$ and $h' \in D_K$ be an $h$-minimal element for some $h \in K$. Then $h^{\prime -1}$ is $h^{-1}$-minimal.
\end{lemma}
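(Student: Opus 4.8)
The plan is to reduce the claim to a single inequality between idempotents and then to manufacture a well-chosen pre-image of $h$ on which the hypothesis of $h$-minimality can be applied. First I would record, via Lemma \ref{lem_tightinv}, that inversion restricts to a bijection of $h(d_K)^{-1}$ onto $h^{-1}(d_K)^{-1}$ compatible with $d_K$; in particular $h^{\prime -1} \in h^{-1}(d_K)^{-1}$ and, as $g'$ ranges over $h(d_K)^{-1}$, the element $g^{\prime -1}$ ranges over all of $h^{-1}(d_K)^{-1}$. Since the range idempotent of $h^{\prime -1}$ is $(h^{\prime -1})(h^{\prime -1})^{-1} = h^{\prime -1} h'$ and that of $g^{\prime -1}$ is $g^{\prime -1} g'$, the assertion that $h^{\prime -1}$ is $h^{-1}$-minimal is exactly the assertion that $h^{\prime -1} h' \le g^{\prime -1} g'$ for every $g' \in h(d_K)^{-1}$. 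Thus the whole problem is to transfer the hypothesis, which is minimality of the \emph{range} idempotent $h' h^{\prime -1}$ among pre-images of $h$, into minimality of the \emph{domain} idempotent $h^{\prime -1} h'$.

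The key construction is as follows. Fix $g' \in h(d_K)^{-1}$ and put $f = g^{\prime -1} g'$, an idempotent of $D_K$, and $p = h' f = h' g^{\prime -1} g'$. I would first check that $p$ is again a pre-image of $h$. Using Lemma \ref{lem_tightinv} the three factors map under $d_K$ to $h$, $h^{-1}$, $h$, and because $K$ is symmetrised ($K \supseteq K^{-1} K$) the intermediate products $h^{-1} h$ and $h \cdot h^{-1} h = h$ all lie in $K$; hence the multiplicativity clause of Definition \ref{def_lwf} applies at each binary step and $p\, d_K = h$. Now $h$-minimality of $h'$ yields $h' h^{\prime -1} \le p p^{-1}$, where $p p^{-1} = h' f\, h^{\prime -1}$.

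To finish I would run the idempotent algebra. Writing $e = h' h^{\prime -1}$ and using that $e \le p p^{-1}$ means $e \cdot p p^{-1} = e$, I obtain $h'\,(h^{\prime -1} h')\, f\, h^{\prime -1} = h' h^{\prime -1}$, and since $h'\,(h^{\prime -1} h')\, h^{\prime -1} = h' h^{\prime -1}$ this reads $h'\,(h^{\prime -1} h')\, f\, h^{\prime -1} = h'\,(h^{\prime -1} h')\, h^{\prime -1}$. Multiplying on the left by $h^{\prime -1}$ and on the right by $h'$, using $h^{\prime -1} h' h^{\prime -1} = h^{\prime -1}$ and the fact that the idempotents of an inverse semigroup commute, collapses this to $(h^{\prime -1} h')(g^{\prime -1} g') = h^{\prime -1} h'$, that is $h^{\prime -1} h' \le g^{\prime -1} g'$. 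As $g'$ was arbitrary, this is precisely the required minimality.

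The main obstacle I anticipate is not the final algebra but keeping all products inside the controlled region: $d_K$ is only a partial morphism, so one may compute the image of a product only when both of its arguments already map into $K$. The element $p = h' g^{\prime -1} g'$ is chosen exactly so that every binary multiplication performed on it has both arguments landing in $K$, which is where the symmetrisation hypothesis is indispensable. The conceptual crux is the observation that right-multiplying the range-minimal pre-image $h'$ by the domain idempotent $f$ of a competitor $g'$ produces a new pre-image whose range idempotent encodes the comparison of the two domain idempotents; minimality of ranges then forces the desired comparison of domains.
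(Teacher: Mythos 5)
Your proof is correct and follows essentially the same route as the paper: your auxiliary element $p = h' g^{\prime -1} g'$ is exactly the paper's pre-image $h' h'' h^{\prime\prime -1}$ (with $h'' = g^{\prime -1}$), and $h$-minimality is applied to it in the same way. The only difference is presentational: the paper compares the resulting idempotents by contradiction via the Wagner--Preston representation, whereas you carry out the equivalent computation directly using commuting idempotents.
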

\begin{proof}
By Wagner--Preston theorem $D_K$ embeds into the symmetric inverse monoid on some finite set $\Sigma$.

Consider arbitrary $h'' \in h^{-1} d_K^{-1}$. Assume that $h^{\prime -1} h'  > h'' h^{\prime \prime -1}$, i.e. $\Sigma_1 \supsetneq \Sigma_2$ where $\Sigma_1$ and $\Sigma_2$ are the subsets of $\Sigma$ such that the first and the second respectively correspond to the identity maps on them. Note that it means that $h' = h' h^{\prime -1} h' > h' h'' h^{\prime \prime -1}$ as the left-hand side is a partial bijection between sets of order $|\Sigma_1|$ and the right-hand side is a partial bijection between sets of order no more than $|\Sigma_2|$, so they cannot be equal.

The element  $h' h'' h^{\prime \prime -1}$ maps to $h$ under $D_K$ by the multiplicative rules, however it follows that $(h' h'' h^{\prime \prime -1})(h' h'' h^{\prime \prime -1})^{-1} < h' h^{\prime -1}$, meaning that $h'$ is not $h$-minimal. This means our assumption is incorrect, and $h^{\prime -1}$ is $h^{-1}$-minimal.
\end{proof}

\begin{lemma}\label{lem_min}
Let $S$ be an infinite iLWF semigroup, $K$ be its finite symmetrised subset and $D_K, d_K$ be a tight pair for $K$. Then for every $h \in K$   there exists an $h$-minimal element of $D_K$.
\end{lemma}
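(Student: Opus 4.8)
The plan is to exhibit a single pre-image of $h$ whose associated idempotent lies below the idempotent of every other pre-image, by simultaneously ``intersecting'' all the domain idempotents. Write $h d_K^{-1} = \{h'_1, \ldots, h'_r\}$ for the (finite) set of pre-images of $h$ under $d_K$, and set $e_i = h'_i h_i^{\prime -1}$ for each $i$. I would use the standard fact that the idempotents of an inverse semigroup commute and form a meet-semilattice under multiplication, in which $e \le f$ precisely when $ef = e$; equivalently, under the Wagner--Preston embedding of $D_K$ into a symmetric inverse monoid each $e_i$ is the partial identity on the domain of $h'_i$, and the natural order $\le$ is inclusion of domains. In particular the element $E = e_1 e_2 \cdots e_r$ is an idempotent with $E e_i = E$, hence $E \le e_i$ for every $i$.

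First I would fix an arbitrary pre-image, say $h' = h'_1$, and consider the candidate $g' = E h'$. The crux is to verify that $g'$ is again a pre-image of $h$ under $d_K$, for which I would combine the multiplication property of the tight pair $(D_K, d_K)$, Lemma \ref{lem_tightinv}, and the hypothesis that $K$ is symmetrised. Concretely, Lemma \ref{lem_tightinv} gives $h_i^{\prime -1} d_K = (h'_i d_K)^{-1} = h^{-1}$, and since $K$ is symmetrised both $h^{-1}$ and $hh^{-1}$ lie in $K$; thus the multiplication property applies to give $e_i d_K = (h'_i d_K)(h_i^{\prime -1} d_K) = h h^{-1} \in K$. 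Iterating the multiplication property along the product $e_1 \cdots e_r$, every partial product maps to the idempotent $hh^{-1}$, so $E d_K = hh^{-1}$, and a final application yields
\[
g' d_K = (E d_K)(h' d_K) = (hh^{-1})h = h ,
\]
so that $g' \in h d_K^{-1}$.

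It then remains to compute $g' g^{\prime -1}$ and compare it with each $e_i$. Since $E$ is idempotent we have $g^{\prime -1} = h^{\prime -1} E$, and using that idempotents commute,
\[
g' g^{\prime -1} = E\, h' h^{\prime -1}\, E = E e_1 E = E ,
\]
because $e_1 = h'_1 h_1^{\prime -1}$ is one of the factors of $E$. As $E \le e_i$ for all $i$, we conclude $g' g^{\prime -1} \le h'_i h_i^{\prime -1}$ for every pre-image $h'_i$ of $h$, which is exactly the assertion that $g'$ is $h$-minimal.

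The main obstacle is the verification in the middle step that $g'$ remains a pre-image of $h$: the product $E h'$ is formed inside $D_K$, and the only handle we have on $d_K$ is the multiplication property, which applies solely when both factors already map into $K$. This is precisely why symmetrisation of $K$ (keeping $h^{-1}$ and $hh^{-1}$ inside $K$) and Lemma \ref{lem_tightinv} (ensuring inverses of pre-images are pre-images of inverses) are indispensable; once these are in hand, both the computation of $g' d_K$ and that of the idempotent $g' g^{\prime -1}$ reduce to routine semilattice manipulation.
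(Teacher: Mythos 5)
Your proof is correct and follows essentially the same strategy as the paper: left-multiply an arbitrary pre-image of $h$ by a product of elements lying over the idempotent $hh^{-1}$, so that the resulting element's domain idempotent sits below that of every other pre-image. The only (harmless) difference is that the paper multiplies together \emph{all} elements of $(hh^{-1})d_K^{-1}$, whereas you restrict to the idempotents $h'_i h_i^{\prime -1}$ and justify each application of the multiplication property explicitly via Lemma \ref{lem_tightinv} and the symmetrisation of $K$, which makes the computation $g'g^{\prime -1}=E\le e_i$ slightly cleaner.
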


\begin{proof}
Consider the set $E' = (h h^{-1}) d_K^{-1}$. Denote by $e'$ the product of all of the elements inside $E'$. By the multiplication properties, $e' \in E'$. 

Now consider an arbitrary element $h' \in h d_K^{-1}$. The element $e' h'$ will be $h$-minimal as for any   $h'' \in  h d_K^{-1}$ we have $h'' h^{\prime \prime -1} \ge e' = e' h' h^{\prime -1} = (e' h') (e'h')^{-1}$.
\end{proof}

\begin{lemma}\label{lem_min2}
Let $S$ be an infinite iLWF semigroup, $K$ be its finite symmetrised subset, $D_K, d_K$ be a tight pair for $K$ and $h'$ be an $h$-minimal element for $h \in H$. Then $h' h^{\prime -1}$ is a $h h^{-1}$ minimal element and $h^{\prime -1} h'$ is a $h^{-1}h$-minimal element.
\end{lemma}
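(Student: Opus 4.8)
The plan is to prove the first assertion directly and then obtain the second from it together with Lemma \ref{lem_hmininv}. Throughout I would abbreviate $d_K$ to $d$ and, following the proof of Lemma \ref{lem_hmininv}, regard $D_K$ as a subsemigroup of the symmetric inverse monoid on a finite set $\Sigma$ via the Wagner--Preston theorem, so that the natural partial order is just restriction of partial bijections.

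Before the main step I would record two easy facts. Since $D_K$ is inverse, $h' h^{\prime -1}$ is an idempotent, hence its own inverse, so $(h' h^{\prime -1})(h' h^{\prime -1})^{-1} = h' h^{\prime -1}$; therefore to show that $h' h^{\prime -1}$ is $h h^{-1}$-minimal it is enough to prove $h' h^{\prime -1} \le g'' g^{\prime \prime -1}$ for every $g'' \in (h h^{-1}) d^{-1}$. Moreover $h' h^{\prime -1}$ really is a candidate: by Lemma \ref{lem_tightinv} we have $h^{\prime -1} d = (h' d)^{-1} = h^{-1}$, and since $K$ is symmetrised both $h$ and $h^{-1}$ lie in $K$, so the multiplicative property gives $(h' h^{\prime -1}) d = h h^{-1}$.

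For the main step I would fix an arbitrary $g'' \in (h h^{-1}) d^{-1}$ and study the product $g'' h'$. Because $g'' d = h h^{-1} \in K$ and $h' d = h \in K$, the multiplicative property of Definition \ref{def_ilwf} yields $(g'' h') d = (h h^{-1}) h = h$, so $g'' h' \in h d^{-1}$. The $h$-minimality of $h'$ then gives $h' h^{\prime -1} \le (g'' h')(g'' h')^{-1} = g''(h' h^{\prime -1}) g^{\prime \prime -1}$. Finally, because $h' h^{\prime -1}$ is an idempotent, $g''(h' h^{\prime -1}) g^{\prime \prime -1}$ is the restriction of the partial identity $g'' g^{\prime \prime -1}$ to those points of $\mathrm{dom}(g'')$ that $g''$ sends into $\Im(h' h^{\prime -1})$, whence $g''(h' h^{\prime -1}) g^{\prime \prime -1} \le g'' g^{\prime \prime -1}$. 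Chaining the two inequalities produces $h' h^{\prime -1} \le g'' g^{\prime \prime -1}$, which is exactly the $h h^{-1}$-minimality sought.

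The second assertion would then be immediate: by Lemma \ref{lem_hmininv} the element $h^{\prime -1}$ is $h^{-1}$-minimal, so applying the first assertion with $h$ and $h'$ replaced by $h^{-1}$ and $h^{\prime -1}$ shows that $h^{\prime -1}(h^{\prime -1})^{-1} = h^{\prime -1} h'$ is $\bigl(h^{-1}(h^{-1})^{-1}\bigr)$-minimal, i.e. $h^{-1} h$-minimal. The only genuinely delicate point is the inequality $g''(h' h^{\prime -1}) g^{\prime \prime -1} \le g'' g^{\prime \prime -1}$; I expect the cleanest justification to be the elementary inverse-semigroup identity $c f c^{-1} \le c c^{-1}$, valid for any element $c$ and idempotent $f$ (verified using that idempotents commute), though the partial-bijection picture above makes it equally transparent. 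Everything else is bookkeeping with the multiplicative property and Lemma \ref{lem_tightinv}.
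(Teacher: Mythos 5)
Your proof is correct and follows essentially the same route as the paper: for $g''\in (hh^{-1})d^{-1}$ you form $g''h'\in hd^{-1}$, apply $h$-minimality to get $h'h^{\prime -1}\le g''h'h^{\prime -1}g^{\prime\prime -1}\le g''g^{\prime\prime -1}$, which is exactly the paper's chain of inequalities. The paper dismisses the second assertion as ``similar,'' whereas you derive it cleanly from the first assertion together with Lemma \ref{lem_hmininv}; this, and your explicit justification of $cfc^{-1}\le cc^{-1}$, are minor expository improvements rather than a different argument.
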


\begin{proof}
We will demonstrate that $h' h^{\prime -1}$ is a $h h^{-1}$ minimal element with the other proof being similar.

Let $f'$ be an arbitrary element in $(h h^{-1}) d_K^{-1}$. By the multiplication properties $f' h'$ belongs to $h d_K^{-1}$. Since $h'$ is $h$-minimal, we get $h' h^{\prime -1}  \le (f' h') (f' h')^{-1}$. This allows us to conclude that  $(h' h^{\prime -1})  (h' h^{\prime -1})^{-1} =   h' h^{\prime -1}  \le (f' h') (f' h')^{-1} = f' h' h^{\prime -1} f^{\prime -1} \le f f^{\prime -1}$, which means that $h' h^{\prime -1}$ is a $h h^{-1}$ minimal.
\end{proof}

\begin{lemma}\label{lem_unidem}
Let $S$ be an infinite iLWF semigroup, $K=\{h_1\ldots,h_t\}$ be its finite symmetrised subset  and $D_K, d_K$ be a tight pair. Then for any $e \in E(S) \cap K$ there is a unique idempotent in $e d_K^{-1}$. 
\end{lemma}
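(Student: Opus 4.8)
My plan is to treat existence and uniqueness separately; existence is routine and uniqueness is the heart of the matter.

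For existence I would recall, exactly as in the proof of Lemma \ref{lem_tightinv}, that any preimage $z'$ of $e$ has an idempotent power $(z')^n$, and by the multiplicative property $(z')^n d_K = e^n = e$, so $e d_K^{-1}$ contains at least one idempotent. Since $D_K$ is inverse its idempotents commute, so I would let $\epsilon$ be the product of all idempotents in $e d_K^{-1}$: it is idempotent, and because every partial product again maps to $e$ (as $e^2=e$) it lies in $e d_K^{-1}$, whence $\epsilon$ is the minimum idempotent preimage of $e$ in the natural order. Using Lemmas \ref{lem_min} and \ref{lem_min2}, for an $e$-minimal element $h'$ the idempotent $h'h'^{-1}$ is an $e$-minimal idempotent preimage (note $ee^{-1}=e$), and since any idempotent $g$ equals $gg^{-1}$, two $e$-minimal idempotents must coincide by antisymmetry of the order; comparing with $\epsilon$ then shows that $\epsilon$ is precisely this unique $e$-minimal idempotent.

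For uniqueness I would argue by contradiction via tightness. Suppose there is an idempotent $f \in e d_K^{-1}$ with $f \ne \epsilon$, so $\epsilon < f$. The plan is to build a competing map $r_K$ that deletes $f$ from the preimage of $K$. Fixing $s \in S \setminus K$, I would set $B = \{w' \in D_K : w'w'^{-1} \ge f \text{ or } w'^{-1}w' \ge f\}$ and define $r_K(w') = s$ for $w' \in B$ and $r_K(w') = w' d_K$ otherwise. Since $ff^{-1}=f\ge f$ we have $f \in B$ and $f d_K = e \in K$, so $K(r_K)^{-1} = K(d_K)^{-1}\setminus B$ is a proper subset of $K(d_K)^{-1}$; it then remains to verify the wrapping property and coverage.

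The multiplicative property is the clean part. If $u',v' \notin B$ with $u'd_K, v'd_K \in K$, then $(u'v')(u'v')^{-1} = u'(v'v'^{-1})u'^{-1} \le u'u'^{-1}$, so $(u'v')(u'v')^{-1} \ge f$ would force $u'u'^{-1}\ge f$ and hence $u'\in B$; dually $(u'v')^{-1}(u'v') \le v'^{-1}v'$ rules out $(u'v')^{-1}(u'v')\ge f$. Thus $u'v'\notin B$ and $(u'v')r_K = (u'v')d_K = (u'd_K)(v'd_K) = (u'r_K)(v'r_K)$, so $(D_K, r_K)$ would satisfy Definition \ref{def_lwf} and contradict tightness. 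The hard part, and the step I expect to be the main obstacle, is coverage: showing every $k \in K$ retains a preimage outside $B$. The difficulty is that any preimage $w'$ of $k$ satisfies $w'w'^{-1}\ge \mu$, where $\mu$ is the unique $(kk^{-1})$-minimal idempotent, so if $\mu \ge f$ then all preimages of $k$ land in $B$ and coverage breaks. I would attack this by first choosing $f$ minimal among the idempotents of $e d_K^{-1}$ strictly above $\epsilon$, and then using the relations $f \le x'x'^{-1}$ and $f \le y'^{-1}y'$ available for every factorisation $f = x'y'$ (together with Lemma \ref{lem_min2}) to show that the configuration $\mu \ge f$ cannot arise in a tight pair — or, failing a clean exclusion, to repair $r_K$ on such $k$ by a reassignment back into $K$ that keeps the complement of $B$ product-closed. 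Once coverage is secured, the wrapping computation above yields the desired contradiction with tightness, so $\epsilon$ is the only idempotent in $e d_K^{-1}$.
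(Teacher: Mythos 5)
Your existence argument and the reduction of uniqueness to the case $\epsilon < f$ are both sound, and your check that the complement of $B=\{w' : w'w'^{-1}\ge f \text{ or } w'^{-1}w'\ge f\}$ is closed under the relevant products is correct. But the proposal is not a proof: the coverage step, which you yourself single out as the main obstacle, is exactly where the content of the lemma lies, and neither of the two escape routes you sketch is carried out. Concretely, if $h'$ is a $k$-minimal preimage of some $k\in K$ and $\mu_k=h'h'^{-1}$, then every $w'\in k\,d_K^{-1}$ satisfies $w'w'^{-1}\ge\mu_k$; so if $\mu_k\ge f$ then $k\,d_K^{-1}\subseteq B$, the pair $(D_K,r_K)$ fails the coverage clause of Definition \ref{def_lwf}, and no contradiction with tightness is obtained. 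Nothing you have established excludes $\mu_k\ge f$: applying $d_K$ it only forces $(kk^{-1})e=e$, i.e.\ $e\le kk^{-1}$ in $S$, which is a perfectly consistent configuration; and taking $f$ minimal above $\epsilon$ does not obviously help, since $\mu_k$ is an idempotent preimage of $kk^{-1}$, not of $e$, so minimality of $f$ in $e\,d_K^{-1}$ says nothing about whether $f$ sits below $\mu_k$. The fallback of ``repairing $r_K$ by a reassignment back into $K$'' is not specified and would itself have to preserve product-closure, which is the same difficulty in disguise.

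For contrast, the paper does not attack tightness by deleting preimages at all. It fixes the comparable idempotents $e'<f'$, realises $D_K$ inside a symmetric inverse monoid on $\Sigma$ via Wagner--Preston, and builds an injective homomorphism $\gamma$ on the subsemigroup $T$ generated by a system $K'$ of minimal, inverse-compatible preimages, by renaming the points of $\Sigma_2\setminus\Sigma_1$ (the difference of the domains of $f'$ and $e'$) to fresh points of a disjoint set $\Omega$. Minimality of the generators forces the renaming to apply to each of them, so every element of $T\gamma$ has domain and range disjoint from $\Sigma_2\setminus\Sigma_1$; on the other hand $f'\gamma=f'$ still meets $\Sigma_2\setminus\Sigma_1$, while Proposition \ref{prop_tight} (accurate products) forces $f'\in T$. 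That is the contradiction. If you want to rescue your route, you must actually prove the coverage claim, and I do not see how to do so without an argument of essentially this strength.
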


\begin{proof}
By Wagner--Preston theorem $D_K$ embeds into the symmetric inverse monoid on $\Sigma$. 

Assume that there exists $e \in E(S) \cap H$ such that there are idempotents $e',f' \in e d_K^{-1}$ with $e' < f'$ in the natural partial order on $D_H$.

Consider the set $\Sigma_1 \subseteq \Sigma$ which is the domain/range of $e'$ as a partial bijection on $\Sigma$ and the set $\Sigma_2 \subseteq \Sigma$ which is the domain/range of $f'$ as a partial bijection on $\Sigma$. Naturally, $\Sigma_1 \subsetneq \Sigma_2$. Let us expand the set $\Sigma$ by a new set $\Omega$, which is in a 1-to-1 correspondence to elements of $\Sigma_2 \setminus \Sigma_1$. Denote this correspondence by $\eta: \Sigma_1\setminus \Sigma_2 \rightarrow \Omega$.

We will construct a new function $d^*_K$ from the symmetric inverse monoid on $\Sigma \sqcup \Omega$, denoted $I$, such that $I$ and $d^*_K$ satisfy Definition \ref{def_ilwf}. 

Let us define a map $\gamma$ from $D_K$ to $I$. Consider an arbitrary element $x'$ of $D_K$. Let $\Delta$ and $\Lambda$ denote its domain and range as a partial bijection on $\Sigma$. We will construct the element $x' \gamma$ of $I$ by specifying its range and image $\Delta'$ and $\Lambda'$ and the connection between them.

Assume $\Delta \cap  (\Sigma_2 \setminus \Sigma_1) \neq \emptyset$ and $(x' x^{\prime -1} e') d_K \neq (x' x^{\prime -1}) d_K$. In this case set $\Delta '$ to be $[\Delta  \setminus (\Sigma_2 \setminus \Sigma_1) ] \cup  (\Delta  \cap  (\Sigma_2 \setminus \Sigma_1) )\eta $. Otherwise set $\Delta' = \Delta $.

Similarly, assume $\Lambda \cap  (\Sigma_2 \setminus \Sigma_1) \neq \emptyset$ and $(e' x^{\prime -1} x') d_K \neq ( x^{\prime -1} x') d_K$. In this case set $\Lambda'$ to be $[\Lambda' \setminus (\Sigma_2 \setminus \Sigma_1) ] \cup  (\Lambda' \cap  (\Sigma_2 \setminus \Sigma_1) ) \eta$. Otherwise set $\Lambda' = \Lambda$.

Finally, let $x' \gamma$ be the map between $\Delta'$ and $\Lambda'$ which maps elements exactly as $x'$, except for changing all $b \in (\Sigma_2 \setminus \Sigma_1)$ from domain/range  into $b\eta$. Define $(x' \gamma) d^*_K := x' d_K$. Note that $d^*_K$ is well-defined as $\gamma$ is a 1-to-1 map (we can recover $x'$ from $x' \gamma$ by replacing all of $b \eta$ with $b$ and keeping the relations).

Let $K'$ be the set $\{h'_1,\ldots,h'_t\}$ such that each $h'_i$ is $h_i$-minimal, if $h_i$ is idempotent then $h'_i$ is also idempotent and if $h_i = h_j^{-1}$ then $h'_i = h_j^{\prime -1}$. We claim that $\gamma$ is an injective homomorphism from $T = \langle K' \rangle$ into $I$.

Consider an $h$-minimal element $h'$ from $K'$ and its domain $\Upsilon_1$ and range $\Upsilon_2$ as a partial bijection on $\Sigma$. Note that if $\Upsilon_1 \cap  (\Sigma_2 \setminus \Sigma_1) \neq \emptyset$ then it is impossible for the equality $(h' h^{\prime -1} e') d_K = (h' h^{\prime -1}) d_K = h h^{-1}$ to hold since $(h' h^{\prime -1} e') (h' h^{\prime -1} e') ^{-1} = h' h^{\prime -1} e' < h' h^{\prime -1} = h' h^{\prime -1}  (h' h^{\prime -1} )^{-1}$ and $h' h^{\prime -1}$ is an $h h^{-1}$-minimal element by Lemma \ref{lem_min2}. Thus, $\gamma$ acts on $K'$ exactly by renaming all of the elements of the set $\Sigma_2 \setminus \Sigma_1$ in the domains and ranges to the respective elements of $\Omega$ under $\eta$, which evidently preserves the semigroup structure and has the injective property.

Note that $f'$ maps to $f'$ under $\gamma$ as $f' f^{\prime -1} e' = e'$ and $(f' f^{\prime -1} e') d_K = e' d_K$. This means that $f'$ does not belong to $T \gamma$ as none of the generators in the set $K' \gamma$ contain elements of $\Sigma_2 \setminus \Sigma_1$ in their domains or ranges. However, $f' \in T$ as by Proposition \ref{prop_tight} applied to the set $K'$ of pre-images of $K$ as $D_K, d_K$ is tight. This is a contradiction, meaning that there are no idempotents $e',f' \in e d_K^{-1}$ with $e' < f'$ in the natural partial order inside $D_K$. 

This immediately implies that there are no idempotents  $e',f' \in e d_K^{-1}$ with $e' \neq f'$ as for such a pair we have $e'f' \in e d_K^{-1}$, $e'f'$ is an idempotent and at least one of $e'f' <e'$ and $e'f' < f'$ holds.
\end{proof}

Now we can demonstrate the final result of this section.

\begin{theorem}
Let $S$ be a countable iLWF semigroup with a finite number of idempotents. Then $S$ is iLEF.
\end{theorem}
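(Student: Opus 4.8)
The plan is to upgrade the wrapping of a finite set into an honest embedding by replacing each element of $H$ with a canonical, \emph{minimal} preimage inside a single finite inverse semigroup. Given a finite $H\subseteq S$, I would first enlarge it to a finite symmetrised set $K$ containing $H$, the (finitely many) idempotents $E(S)$, and all auxiliary elements $xx^{-1}$, $x^{-1}x$, $xy$ arising when $x,y,xy\in H$; finiteness of $E(S)$ is exactly what keeps this enlargement finite. Using Proposition \ref{prop_extight} (valid in the iLWF setting by the preceding remark) I fix a tight pair $D_K,d_K$. By Lemma \ref{lem_unidem} each $e\in E(S)$ has a unique idempotent preimage $\bar e$, and $e\mapsto\bar e$ realises $E(S)$ as a subsemilattice $\bar E\le E(D_K)$; since a minimal idempotent over $hh^{-1}$ is in particular an idempotent over $hh^{-1}$, uniqueness forces the minimal left idempotent of any $h\in K$ to be exactly $\overline{hh^{-1}}$. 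Combining this with Lemmas \ref{lem_min}, \ref{lem_min2}, \ref{lem_hmininv} and \ref{lem_tightinv}, every $h$-minimal element $h'$ (which exists) satisfies $h'h'^{-1}=\overline{hh^{-1}}$ and $h'^{-1}h'=\overline{h^{-1}h}$, and I may select the minimal representatives symmetrically, so that the chosen representative of $h^{-1}$ is $(h')^{-1}$ and idempotents represent themselves.

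The engine of the argument is that minimality is preserved under the partial multiplication. For $x,y\in K$ with $xy\in K$ and chosen minimal representatives $x',y'$, the product $x'y'$ lies in $(xy)d_K^{-1}$ by the wrapping property of Definition \ref{def_ilwf}, and I would pin down its idempotents: $(x'y')(x'y')^{-1}=x'\,\overline{yy^{-1}}\,x'^{-1}$ is an idempotent of $D_K$ whose $d_K$-image is $(xy)(xy)^{-1}\in E(S)$, so by Lemma \ref{lem_unidem} it must equal $\overline{(xy)(xy)^{-1}}$, the minimal left idempotent of $xy$; the right idempotent is symmetric. Hence $x'y'$ is again an $xy$-minimal element. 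It is precisely here that having placed $E(S)$ and the auxiliary products into $K$ is used, so that every intermediate image stays in $K$ and the wrapping property applies at each step.

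To turn the minimal representatives into an embedding I would pass to the power semigroup $\P(D_K)$ --- the same device used earlier for the non-LWF examples --- and send $h\in H$ to the set $M_h$ of all $h$-minimal elements. The previous paragraph gives $M_xM_y=M_{xy}$ whenever $x,y,xy\in K$ (the reverse inclusion via an accurate-product argument as in Proposition \ref{prop_tight}), the $M_h$ are pairwise distinct since distinct elements of $H$ have disjoint preimages, and $M_{hh^{-1}}=\{\overline{hh^{-1}}\}$ is a singleton in $\bar E$, so that $M_{h^{-1}}$ is an inverse of $M_h$. Setting $f^{(i)}_H(h)=M_h$ into the finite subsemigroup $F=\langle M_h : h\in K\rangle$ of $\P(D_K)$ then satisfies Definition \ref{def_ilef}, \emph{provided} $F$ is genuinely inverse. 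I expect this last point to be the main obstacle: one must show that $F$ is regular with commuting idempotents, and not merely that it contains inverses of its generators. The natural route is to prove that every idempotent arising in $F$ is a singleton drawn from $\bar E$ (hence commuting), arguing again by minimality together with Lemma \ref{lem_unidem}, and that no product $M_xM_y$ destroys regularity, falling back on tightness exactly as in the proofs of Proposition \ref{prop_tight} and Lemma \ref{lem_unidem}. An alternative packaging, avoiding $\P(D_K)$, is to let $F$ be the quotient of $D_K$ by the congruence generated by identifying all minimal representatives of each $h\in K$; there the same computation yields well-definedness and multiplicativity, and the crux becomes showing that this congruence does not merge minimal representatives of distinct elements of $K$ --- the same difficulty in dual form.
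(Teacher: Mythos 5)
Your overall architecture matches the paper's — enlarge $H$ to a finite symmetrised $K$ containing $E(S)$ (which is where finiteness of $E(S)$ enters), pass to a tight pair $D_K,d_K$, exploit the uniqueness of idempotent preimages from Lemma \ref{lem_unidem}, and map $H$ into the power semigroup $\P(D_K)$ — but the map you choose does not satisfy the required multiplicativity, and that, not inverseness of the target, is the real difficulty. Note first that your minimality framing is vacuous here: by Lemma \ref{lem_tightinv} every $h''\in h\,d_K^{-1}$ has $h''h''^{-1}$ an idempotent preimage of $hh^{-1}$, hence by Lemma \ref{lem_unidem} equal to the unique such idempotent, so \emph{every} preimage is $h$-minimal and $M_h=h\,d_K^{-1}$. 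The inclusion $M_xM_y\subseteq M_{xy}$ is then fine, but the reverse inclusion is not established and your sketched route does not close: Proposition \ref{prop_tight} only gives, for $z'\in(xy)d_K^{-1}$, a factorisation $z'=u'v'$ with $u'd_K,v'd_K\in K$ and $(u'd_K)(v'd_K)=xy$, and since $K$ contains all of $E(S)$ there are many such factorisations besides $x\cdot y$ (for instance $e\cdot(xy)$ for idempotents $e\ge (xy)(xy)^{-1}$), while in the base case $k=1$ the accurate product is a single generator and supplies no factorisation at all. Attempting to force a factorisation directly, say $z'=x_0'\cdot\bigl(y_0'(x_0'y_0')^{-1}z'\bigr)$, fails because the second factor maps to $yy^{-1}x^{-1}xy$, which is below $y$ in the natural partial order but in general not equal to $y$, so it is not a preimage of $y$. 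This is precisely the obstruction the paper's definition $h\mapsto (h\,d_K^{-1})M$ with $M=E(S)d_K^{-1}$ is built to absorb: the trailing factor $M$ swallows the surplus idempotent (the element $y_k^{-1}x_km$ in the paper's computation), making the reverse inclusion provable, at the price of a forward inclusion that now requires showing elements of $M$ commute with the idempotents $x_jx_j^{-1}$ — again via Lemma \ref{lem_unidem}.

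The obstacle you single out as the crux — proving that the subsemigroup of $\P(D_K)$ generated by the images is inverse — is a non-issue. It suffices to exhibit $S$ as LEF, since the witness semigroup in Definition \ref{def_lef} need not be inverse, and then Proposition \ref{prop_lefisem} upgrades LEF to iLEF for any inverse semigroup; this is exactly how the paper concludes. Your alternative packaging via a quotient of $D_K$ carries the same unresolved core (that the identification neither merges preimages of distinct elements nor breaks products), so it does not repair the gap.
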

\begin{proof}
The statement evidently holds if $S$ is finite.

Assume that $S$ is infinite and let $H=\{h_1,\ldots,h_t\}$ be its finite subset. Consider the finite set $K = H \cup H^{-1} \cup E(S)$, where $E(S)$ is the set of idempotents of $S$. The set $K$ is symmetrised. Consider its tight pair $D_K, d_K$. By Wagner--Preston theorem $D_K$ embeds into the symmetric inverse monoid on $\Sigma$. 

Define $F_H$ to be the power semigroup of $D_K$. We will demonstrate that $F_H$ and the function $f_H: H \rightarrow F_H$ defined by $h_i \mapsto h_i d_K^{-1} M$ where $M = E(S) q_K^{-1}$ satisfy the properties of Definition \ref{def_lef}.

Evidently, $F_H$ is finite since $D_K$ is finite.

To demonstrate injectivity, assume  $h_i f_H = h_j f_H$ . Consider an arbitrary element $x_i$ of $h_i d_K^{-1}$. We have  $x_i = x_i (x_i^{ -1} x'_i) \in (h_i d_K^{-1}) M$, since $ (x_i^{-1} x_i) d_K =  h_i^{-1} h_i \in E(S)$. By our assumption we have $x_i \in (h_j d_K^{-1})M$, i.e. for some $x_j \in  h_j d_K^{-1}$ and $m \in M$ holds $x_i = x_j m$. By applying $d_K$ we have $h_i = h_j e$, where $e$ is an idempotent in $S$. Similarly, $h_j = h_i f$, where $f$ is also an idempotent in $S$. This allows us to conclude using the natural partial order on $S$ that $h_i = h_j$.

Finally, we want to show that if $h_i h_j = h_k$ for some $h_i,h_j,h_k \in H$ then $(h_i f_H)(h_j f_H) = h_k f_H$, i.e. that $(h_i d_K^{-1})M(h_j d_K^{-1})M = (h_k d_K^{-1})M$.

Consider an element of the left-handed side, which can be presented as $x_i m_1 x_j m_2$ with $m_1,m_2 \in M$, $x_i \in h_i d_K^{-1}$ and $x_j \in h_j d_K^{-1}$.

We claim that $m_1$ commutes with $x_j x_j^{ -1}$. To prove this, consider the domain $\Delta$ and range $\Lambda$ of $m_1$ and the domain/range  $\Upsilon$ of $x_j x_j^{ -1}$ as partial bijections on $\Sigma$. We claim that $\Delta=\Lambda$ and that $m_1$ sends elements of $\Delta \cap \Upsilon$ to $\Delta \cap \Upsilon$. The former follows from the fact that $m_1 m_1^{-1}$ and $m_1^{-1} m_1$ are idempotents with the same image, which means by Lemma \ref{lem_unidem} that they are the same idempotent. The latter follows from the fact that that $((x_j x_j^{-1}) m_1) q_K =  ((x_j x_j^{-1}) q_K) (m_1 q_K) \in E(S)$, allowing us to use the same argument to check that $((x_j x_j^{-1}) m_1) ((x_j x_j^{-1}) m_1))^{-1}$ (which is identity on $\Delta \cap \Upsilon$) coincides with $((x_j x_j^{-1}) m_1)^{-1} ((x_j x_j^{-1}) m_1))$ (which is identity on $(\Delta \cap \Upsilon) m_1$). It follows that $m_1$ commutes with $x_j x_j^{-1}$, with both $m_1 x_j x_j^{-1}$ and $x_j x_j^{-1} m_1$ being restrictions of $m_1$ to $\Upsilon$.
 
Thus  $x_i m_1 x_j m_2 = x_i m_1 x_j x_j^{-1} x_j m_2 = x_i x_j x_j^{-1} m_1 x_j m_2$. As $h_i h_j = h_k$, we have $x_i x_j \in h_k d_K^{-1}$. Additionally,  $x_j^{-1} m_1 x_j $ maps to $h_j^{-1} e h_j$ for some $e\in E(S)$, which is also an idempotent. Thus, the image of $x_j^{-1} m_1 x_j m_2$ is an idempotent as well, which means $x_i x_j x_j^{-1} m_1 x_j m_2 \in  (h_k d_K^{-1}) M$.

Consider an element of the right-handed side, which can be presented as $x_k m$ with $m \in M$ and $x_k \in (h_k d_K^{-1}) M$. By the multiplication property we know that for arbitrary $y_i \in  h_i d_K^{-1}$, $y_j \in  h_j d_K^{-1}$ holds $y_k:=y_i y_j \in  h_k d_K^{-1}$. We have $x_k m = x_k x_k^{-1} x_k m =$ (since both $ x_k x_k^{-1}$ and $y_k y_k^{-1} $ are idempotents with the same idempotent image) $=y_k y_k^{-1} x_k m = y_i y_j   y_k^{-1} x_k m$. It is straightforward to see that $y_k^{-1} x_k m \in M$ by the multiplication property. Thus, $x_k m = y_i y_j   y_k^{-1} x_k m = (y_i y_i^{-1} y_i ) ( y_j   y_k^{-1} x_k m) \in ((h_i d_K^{-1}) M)((h_j d_K^{-1}) M)$.

The argument above demonstrates that $S$ is LEF, however as it is inverse, it is also iLEF by Proposition \ref{prop_lefisem}.
\end{proof}

In particular, as groups are exactly inverse semigroups with a single idempotent, and a group which is an LWF group is clearly an iLWF semigroup as well, the following result, originally proven in \cite{GV}, follows.

\begin{corollary}
Let $G$ be an LWF group. Then $G$ is LEF.
\end{corollary}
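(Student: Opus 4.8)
The plan is to obtain the corollary as a direct specialisation of the preceding theorem, exploiting the fact that a group is precisely an inverse semigroup whose idempotent set is a singleton. First I would record the two structural observations that make the theorem applicable. A group $G$ is an inverse semigroup in which $g^{-1}$ is the usual group inverse, and if $e \in G$ satisfies $e^2 = e$ then multiplying by $e^{-1}$ yields $e = 1$; hence $E(G) = \{1\}$ is finite and the hypothesis on the number of idempotents is automatic. Next, being an LWF group means that every finite $H \subseteq G$ is wrapped by a finite \emph{group} $D_H$; since every finite group is in particular a finite inverse semigroup, the very same pair $(D_H, d_H)$ witnesses that $G$ is iLWF. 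Thus $G$ is an iLWF inverse semigroup with a single idempotent, and if $G$ is countable the theorem applies verbatim to give that $G$ is iLEF, whence $G$ is LEF by Proposition \ref{prop_lefisem}.

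The only genuine gap between the theorem and the corollary is that the theorem is stated for \emph{countable} semigroups, whereas an arbitrary LWF group need not be countable. To bridge this I would argue locally. Fix a finite subset $H \subseteq G$ and let $N = \langle H \rangle$ be the (countable) subgroup it generates. I would first check that $N$ is itself LWF: given any finite $H' \subseteq N$, set $K = H' \cup H'H'$, a finite subset of $N$, and apply the LWF property of $G$ to $K$ to obtain a finite group $D_K$ and a map $d_K$. Exactly as in the proof of Proposition \ref{prop_lefislwf}, redirecting every element of $D_K$ whose $d_K$-image leaves $K$ to a fixed point of $N$ produces a wrapping of $H'$ landing inside $N$; the product condition survives because a product of two $H'$-valued elements is forced into $H'H' \subseteq K$, where $d_K$ is unchanged. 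Hence $N$ is a countable LWF, and therefore iLWF, group with a single idempotent.

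Now I would invoke the theorem on $N$ to conclude that $N$ is iLEF, and then Proposition \ref{prop_lefisem} to conclude that $N$ is LEF. Finally, an LEF-witness for the subset $H$ inside $N$ is simultaneously a witness for $H$ inside $G$, because Definition \ref{def_lef} constrains only products of elements of $H$ itself; as $H$ was an arbitrary finite subset of $G$, this shows that $G$ is LEF.

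The main obstacle I anticipate is precisely this countability reduction, namely verifying that the LWF property passes to the generated subgroup $N$, which is where the $K = H' \cup H'H'$ device of Proposition \ref{prop_lefislwf} is essential so that the modified wrapping map still respects products landing in $H'$. Everything else — recognising $G$ as a one-idempotent inverse semigroup and finite groups as finite inverse semigroups — is routine, and if one is content to state the result for countable $G$, as in the original source \cite{GV}, the reduction can be omitted and the theorem applied directly.
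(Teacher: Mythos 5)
Your proposal is correct and follows the same route as the paper: the corollary is obtained by recognising a group as an inverse semigroup with exactly one idempotent, observing that a finite group wrapping witnesses the iLWF property since finite groups are finite inverse semigroups, and then specialising the preceding theorem, with Proposition \ref{prop_lefisem} converting iLEF back to LEF. The one place you go beyond the paper is the countability reduction: the theorem is stated only for countable semigroups, and the paper passes over this hypothesis silently when deducing the corollary for an arbitrary LWF group. Your fix --- pass to the finitely generated (hence countable) subgroup $N=\langle H\rangle$, verify that $N$ inherits the LWF property via the $K=H'\cup H'H'$ redirection device from Proposition \ref{prop_lefislwf}, apply the theorem to $N$, and use the locality of Definition \ref{def_lef} to transport the witness back to $G$ --- is sound and closes a genuine gap in the paper's own derivation.
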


\end{document}